\theoremstyle{plain}
\newtheorem{thm}{Theorem}[section]
\newtheorem{theorem}[thm]{Theorem}
\newtheorem{lemma}[thm]{Lemma}
\newtheorem{corollary}[thm]{Corollary}
\theoremstyle{definition}
\newtheorem{remarks}[thm]{Remarks}
\numberwithin{equation}{section}
 \title[Dynamical degrees, Weil's and the standard conjectures]{Relations between dynamical degrees, Weil's Riemann hypothesis and the standard conjectures}
   \author{Tuyen Trung Truong}
 \address{School of Mathematical Sciences, The University of Adelaide, Adelaide SA 5005, Australia}
 \email{tuyen.truong@adelaide.edu.au}
\thanks{The author was supported by Australian Research Council grants DP120104110 and DP150103442. }
    \date{\today}
    \keywords{Correspondence, l-adic cohomology, Rational map, Relative dynamical degrees, Standard conjectures, Topological entropy,  Weil's Riemann hypothesis}
    \subjclass[2010]{37F, 14D, 32U40, 32H50}
\begin{document}
\maketitle
\begin{abstract}
Let $\mathbb{K}$ be an algebraically closed field, $X$ a smooth projective variety over $\mathbb{K}$ and $f:X\rightarrow X$ a dominant regular morphism. Let $N^i(X)$ be the group of algebraic cycles modulo numerical equivalence. Let $\chi (f)$ be the spectral radius of the pullback $f^*:H^*(X,\mathbb{Q}_l)\rightarrow H^*(X,\mathbb{Q}_l)$ on $l$-adic cohomology groups, and $\lambda (f)$ the spectral radius of the pullback $f^*:N^*(X)\rightarrow N^*(X)$. We prove in this paper, by using consequences of Deligne's proof of Weil's Riemann hypothesis, that $\chi (f)=\lambda (f)$. This answers affirmatively a question posed by Esnault and Srinivas. Consequently, the algebraic entropy $\log \chi (f)$ of an endomorphism is both a birational invariant and \'etale invariant.  More general results are proven if  either $\mathbb{K}=\overline{\mathbb{F}_p}$ or the Fundamental Conjecture D (numerical equivalence vs homological equivalence) holds.  Among other results in the paper, we show that if some properties of dynamical degrees, known in the case $\mathbb{K}=\mathbb{C}$, hold in positive characteristics, then simple proofs of Weil's Riemann hypothesis follow.

\end{abstract}

\section{Introduction}
\label{SectionIntroduction}
The proof of the Weil's conjecture by Deligne is one of the major achievements of mathematics in the 20th century. Through the visions of the likes of Weil and Grothendieck, the question about counting the (asymptotic) number of points in finite fields $\mathbb{F}_{q^n}$, as $n\rightarrow \infty$, on a smooth projective variety $X_0$ defined on $\mathbb{F}_q$, is translated to the question about the eigenvalues of the pullbacks $(Fr^n)^*$ on \'etale cohomology groups $H^{*}(X,\mathbb{Q}_l)$. Here $X$ is the lift of $X_0$ to an algebraic closure $\overline{\mathbb{F}}_q$ of $\mathbb{F}_q$, and $Fr$ is the Frobenius map. Bombieri and Grothendieck thought of solving the Weil's Riemann hypothesis via the famous standard conjectures, but the actual proof by Deligne \cite{deligne1, deligne2} was totally different and surprising. For some good references about this, see for example \cite{milne, milne1}. 

The current  paper serves two purposes. First, we use the Weil's Riemann hypothesis or the Fundamental Conjecture D (that modulo torsions numerical and homological equivalences coincide  on algebraic cycles) to extend several known results on dynamical degrees from complex dynamics to positive characteristics.  Conversely, the second purpose is to point out that if some stronger results on dynamical degrees, which again hold for $\mathbb{K}=\mathbb{C}$, hold in positive characteristic, then new simple proofs of the Weil's Riemann hypothesis follow. Thus, it is demonstrated here that there is a curious relation between algebraic dynamics and Weil's cohomology theories. 

This paper was inspired by the results of Esnault and Srinivas \cite{esnault-srinivas} on automorphisms of surfaces. In the remaining of this introduction, we pose some questions to be studied in the paper and then state the main results. To make the presentation concise, we collect some background materials on correspondences and dynamical degrees in Section \ref{Section1}. 

\subsection{Questions}

Let $\mathbb{K}$ be an algebraically closed field, $X$ a smooth projective variety over $\mathbb{K}$ and $f:X\rightarrow X$ a correspondence. (A correspondence is roughly an algebraic cycle of $X\times X$ whose dimension is exact $\dim (X)$.  Examples of interest include regular morphisms, rational maps and a linear combination of such. See Section \ref{Section1} for a precise definition.) The Weil's Riemann hypothesis can be stated in terms of the following numbers $\lambda _i$ and $\chi _i$, see Theorem \ref{Theorem4} and Section \ref{Section3} for more details.

We first consider the groups $N^i(X)$ of algebraic cycles of codimension $i$ modulo numerical equivalence. These are free Abelian groups of finite ranks (see Chapter 19 in \cite{fulton} or Section 6.2 in \cite{esnault-srinivas}). We define $\lambda _i(f)$ to be the numbers
\begin{eqnarray*}
\lambda _i(f):=\limsup _{n\rightarrow\infty} \|(f^n)^*|_{N^{i}(X)}\|^{1/n},
\end{eqnarray*}
where we fix any norm on the finite-dimensional vector space $N^{i}(X)\otimes \mathbb{R}$. In \cite{truong9} (recalled in (\ref{Equation1}) in Section \ref{Section1} below), we showed that in fact the limsup can be replaced by lim, and all numbers $\lambda _i(f)$ are finite.  

Also, we define $\chi _i(f)$ to be the numbers
\begin{eqnarray*}
\chi _i(f):=\limsup _{n\rightarrow\infty} \|(f^n)^*|_{H^{i}(X,\mathbb{Q}_l)}\|^{1/n},
\end{eqnarray*}
here we fix any norm on the finite-dimensional vector space $H^{i}(X,\mathbb{Q}_l)$. In contrast to the $\lambda _i(f)$'s, the finiteness of $\chi _i(f)$'s are not obvious, although by definition we have $\chi _{2i}(f)\geq \lambda _i(f)$. We also do not know whether the limsup in the definition for $\chi _i(f)$ can be replaced by lim.  

We may call the number
\begin{eqnarray*}
\log \chi (f):=\log \max _{i=0,\ldots ,2\dim (X)}\chi _i(f),
\end{eqnarray*}
the algebraic entropy. 

These numbers $\lambda _{i}(f)$ and $\chi _{i}(f)$ have been extensively studied when $\mathbb{K}=\mathbb{C}$ in the context of complex dynamics. They are called dynamical degrees in that setting and are important to the dynamical properties of $f$, see Section \ref{Section1} for more details. The known results in the case $\mathbb{K}=\mathbb{C}$ (see Section \ref{Section1}) and recent results of Esnault and Srinivas \cite{esnault-srinivas} on automorphisms of surfaces over positive characteristic inspire us to study the following questions. 

{\bf Question 1.} Is $\chi _i(f)$ finite, for all $i=0,\ldots ,2\dim (X)$? 

{\bf Question 2.} Is $\chi _{2i}(f)=\lambda _i(f)$ for all $i$?

{\bf Question 3.} (Product formula) Let $f,g:X\rightarrow Y$ be dominant rational maps and $\pi :X\rightarrow Y$ be a dominant rational map so that $\pi\circ f=g\circ \pi$. Is it true that we can define the relative dynamical degrees $\chi _{2i}(f|\pi )$ which satisfy the relations
\begin{eqnarray*}
\chi _{2p}(f)=\max _{0\leq i\leq \dim (Y),~0\leq p-i\leq \dim (X)-\dim (Y)}\chi _{2i}(g)\chi _{2(p-i)}(f|\pi ),
\end{eqnarray*}
for all $p=0,\ldots ,\dim (X)$?

{\bf Question 4.} (Dinh's inequality) Is $\chi _{i}(f)^2\leq \max _{p+q=i}\lambda _p(f)\lambda _q(f)$? 

The following weaker version of Question 4 is enough for applications to dynamics (c.f. Gromov - Yomdin's theorem and Gromov - Dinh - Sibony's inequality, see Section \ref{Section1}):

{\bf Question 4'.} Is $\max _{i=0,\ldots ,2\dim (X)}\chi _i(f)=\max _{i=0,\ldots ,\dim (X)}\lambda _i(f)$?

{\bf Question 5.} Are $\chi _{2i}(f)$ birational invariants? 

Note that the answers to all of these questions are affirmative when $\mathbb{K}=\mathbb{C}$, see Section \ref{Section1}. A crucial advantage in working with $\mathbb{C}$ is that we have  positivity notions, induced from positive closed forms and currents, on cohomology classes. These positivity notions are not yet available on fields of positive characteristics.  

\subsection{Main results} Here we state main results of the paper. We recall that we work on an algebraically closed field $\mathbb{K}$ of arbitrary characteristic. We use the convention that a variety is irreducible. 

We mention a relevant fundamental conjecture on algebraic cycles. We denote by $\mathcal{Z}^i_{hom}(X)$ the set of algebraic cycles on $X$ of codimension $i$ whose image in $H^{2i}(X,\mathbb{Q}_l)$ is $0$; and by $\mathcal{Z}^i_{num}(X)$ the set of algebraic cycles on $X$ of codimension $i$ which are $0$ under the numerical equivalence, that is those cycles $V$ for which $V.W=0$ for all algebraic cycles $W$ of dimension $i$. The following weaker version of the Fundamental Conjecture D is sufficient for our purpose. 

{\bf The numerical - homological equivalences condition.} Given a smooth projective variety $Z$ of {\bf even} dimension $2k'$, we say that $NH(Z)$ holds if $\mathcal{Z}^{k'}_{hom}(Z)\otimes \mathbb{Q}=\mathcal{Z}^{k'}_{num}(Z)\otimes \mathbb{Q}$ for the middle-degree cohomology group $H^{2k'}(Z,\mathbb{Q}_l)$.  

The first result answers Questions 1 and 4'.
\begin{theorem}
1) Assume that  $NH(X\times X)$ holds. Then, Questions 1 and 4' have affirmative answers. More precisely, if $f:X\rightarrow X$ is a dominant correspondence, then
\begin{eqnarray*}
\chi _i(f)\leq \max _{p=0,\ldots ,\dim (X)}\lambda _p(f),
\end{eqnarray*}
for all $i=0,\ldots ,2\dim (X)$.

2) Assume that $f:X\rightarrow X$ is a dominant {\bf regular morphism}. Then Questions 1 and 4' have affirmative answers.  
\label{Theorem1}\end{theorem}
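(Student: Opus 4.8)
The plan is to deduce both parts from a single estimate on the cohomology class of the graph. Write $d=\dim X$, fix an ample class $H$ on $X$, put $G=p_{1}^{*}H+p_{2}^{*}H$ on $X\times X$ (with $p_{1},p_{2}$ the two projections), and for an integer $n$ let $\Gamma_{f^{n}}\subset X\times X$ be the graph of the $n$-th iterate, so $\deg_{G}(\Gamma_{f^{n}}):=\Gamma_{f^{n}}\cdot G^{d}$. The estimate I am aiming for is
\[
\big\|\,[\Gamma_{f^{n}}]\,\big\|_{H^{2d}(X\times X,\,\Q_{l})}\ \le\ C\,n^{b}\,\deg_{G}(\Gamma_{f^{n}})\qquad(n\ge 1),
\]
with $C$ and $b$ independent of $n$; call this $(\star)$.

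\emph{Why $(\star)$ suffices.} By the Künneth formula the class $[\Gamma_{f^{n}}]$ splits into components in $H^{i}(X,\Q_{l})\otimes H^{2d-i}(X,\Q_{l})$, and under Poincar\'e duality the component in the $i$-th summand is the endomorphism $(f^{n})^{*}$ of $H^{i}(X,\Q_{l})$; since projection onto a Künneth summand is a fixed bounded map, $\|(f^{n})^{*}|_{H^{i}}\|\le C'\|[\Gamma_{f^{n}}]\|$, whence $\chi_{i}(f)\le\limsup_{n}\|[\Gamma_{f^{n}}]\|^{1/n}$. On the other hand $\deg_{G}(\Gamma_{f^{n}})=\sum_{a=0}^{d}\binom{d}{a}H^{a}\cdot(f^{n})^{*}H^{d-a}$, and by the results of \cite{truong9} (recalled in (\ref{Equation1})) the $n$-th root of each intersection number $H^{a}\cdot(f^{n})^{*}H^{d-a}$ tends to $\lambda_{d-a}(f)$, so $\limsup_{n}\deg_{G}(\Gamma_{f^{n}})^{1/n}=\max_{0\le p\le d}\lambda_{p}(f)$. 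Combining, $(\star)$ gives $\chi_{i}(f)\le\max_{p}\lambda_{p}(f)$ for every $i$; as $\chi_{2p}(f)\ge\lambda_{p}(f)$ always, one also gets $\chi(f)=\max_{p}\lambda_{p}(f)$, and in particular all $\chi_{i}(f)$ are finite. Thus Questions 1 and 4$'$ follow once $(\star)$ is known, and both parts of the theorem reduce to establishing $(\star)$.

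\emph{Proof of $(\star)$, part 2 (regular morphisms).} Here $\Gamma_{f^{n}}$ is an effective irreducible cycle. Spreading $(X,f)$ out over a finitely generated $\Z$- or $\F_{p}$-algebra and specialising at a closed point, one reduces --- using smooth and proper base change and, in mixed characteristic, the comparison isomorphism, under which $\chi_{i}(f)$ and $\lambda_{p}(f)$ are unchanged --- to the case $\mathbb{K}=\overline{\F_{p}}$, with $X$ and $f$ defined over a finite field $\F_{q}$. Over $\overline{\F_{p}}$, $(\star)$ asserts that the cohomology class of an effective cycle is bounded, in a fixed basis, by a fixed polynomial in its $G$-degree. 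This is where Deligne's proof of Weil's Riemann hypothesis is used: through the Weil~II formalism of weights it controls the Frobenius eigenvalues on the cohomology of $X\times X$ and of the auxiliary (possibly open or singular) varieties one attaches to $\Gamma_{f^{n}}$, and purity converts these into the wanted bound on $\|[\Gamma_{f^{n}}]\|$ in terms of geometric degrees. This is the main obstacle: over $\C$ one would simply estimate $\|[\Gamma_{f^{n}}]\|$ by the mass of the current of integration along $\Gamma_{f^{n}}$, which is $\deg_{G}(\Gamma_{f^{n}})$; that positivity is unavailable in characteristic $p$, and Deligne's theorem is its substitute --- the delicate point being to keep the constants uniform in $n$.

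\emph{Proof of $(\star)$, part 1 (correspondences, assuming $NH(X\times X)$).} For a general correspondence $\Gamma_{f^{n}}$ is a $\Z$-cycle of mixed sign whose naive degree may far exceed $\max_{p}\lambda_{p}(f)^{n}$, so the mass/weight argument is too crude; instead I would use $NH(X\times X)$ directly. That hypothesis says exactly that the intersection pairing restricted to the subspace $V\subseteq H^{2d}(X\times X,\Q_{l})$ spanned by classes of $d$-dimensional algebraic cycles is non-degenerate. Since $[\Gamma_{f^{n}}]\in V$, fixing a $\Q$-basis $Z_{1},\dots,Z_{\rho}$ of $V$ made of cycle classes together with its dual basis yields $\|[\Gamma_{f^{n}}]\|\le C\max_{j}|\Gamma_{f^{n}}\cdot Z_{j}|$. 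Finally, for each fixed $d$-cycle $Z_{j}$ one has $\limsup_{n}|\Gamma_{f^{n}}\cdot Z_{j}|^{1/n}\le\max_{p}\lambda_{p}(f)$ by \cite{truong9} --- the growth of such intersection numbers is governed by the spectral radius of the action of $f$ on $N^{*}(X\times X)$ by composition of correspondences, which is $\max_{p}\lambda_{p}(f)$. Inserting this into the reduction above proves part 1, and the argument runs parallel to part 2, with $NH(X\times X)$ playing the role that Deligne's theorem plays when $f$ is a morphism.
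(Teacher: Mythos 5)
Your part~1 is essentially the paper's argument, only packaged differently: the paper pairs $[\Gamma_{f^n}]$ against $pr_2^*\alpha_p\cdot pr_1^*\beta_q$ and then projects onto the algebraic part of $H^{2\dim X}(X\times X,\Q_l)$ using the decomposition that $NH(X\times X)$ affords, while you pick a cycle basis of that algebraic part and invoke non-degeneracy directly. Both hinge on the same two facts proved from $NH(X\times X)$ plus \cite{truong9}: the intersection form is non-degenerate on the algebraic part (so pairings against a fixed finite family of cycles control the norm of $[\Gamma_{f^n}]$), and $|\Gamma_{f^n}\cdot Z|\le C\deg(\Gamma_{f^n})\deg(Z)$ for effective cycles $Z$. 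That part is fine.

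Part~2 has a genuine gap. Your reduction correctly shows that $(\star)$, i.e.\ $\|[\Gamma_{f^n}]\|\le Cn^b\deg_G(\Gamma_{f^n})$, implies the theorem, and correctly specialises to $\overline{\F_p}$. But you then assert that Deligne's Weil~II, via a ``weight and purity'' argument, bounds the cohomology class of an \emph{arbitrary effective cycle} by its degree. This is neither what the paper proves nor, as far as I can tell, something Weil~II yields: without a non-degenerate pairing on the algebraic part of $H^{2\dim X}(X\times X,\Q_l)$ (which is exactly what $NH(X\times X)$ provides and what you used in part~1), there is no way to recover the size of $[\Gamma_{f^n}]$ from intersection numbers against cycles, and weights alone do not see the number-of-components/degree of an effective cycle. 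If $(\star)$ held unconditionally for effective cycles over $\overline{\F_p}$, it would immediately give part~1 without assuming $NH$, which the paper (and its Theorem~\ref{Theorem6}, proved by a different mechanism) explicitly does not claim.

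The paper's actual part~2 does something essentially different and weaker: it does \emph{not} bound $\|[\Gamma_{f^n}]\|$; it bounds the \emph{traces} $\operatorname{Tr}\bigl[(f^n)^*|_{H^i}\bigr]$. Concretely, by Katz--Messing (a consequence of Deligne's theorem) the K\"unneth projectors $p_i$ are polynomials in the Frobenius pullback, so $p_i(\widetilde{Fr})^*\Delta$ is a \emph{fixed} algebraic cycle on $X\times X$; the generalized Lefschetz trace formula then writes $(-1)^i\operatorname{Tr}[(f^n)^*|_{H^i}] = \Gamma_{f^n}\cdot[p_i(\widetilde{Fr})^*\Delta]$, and the right side is controlled by $\deg(\Gamma_{f^n})$ exactly as in part~1. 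This only controls traces, not operator norms, and the reason it suffices is the regularity hypothesis: for a regular morphism $(f^n)^*=(f^*)^n$, so $\chi_i(f)=sp(f^*|_{H^i})=\limsup_n|\operatorname{Tr}[(f^*)^n|_{H^i}]|^{1/n}$. For a general correspondence $(f^n)^*\ne (f^*)^n$ and traces of $(f^n)^*$ do not determine $\|(f^n)^*\|$, which is precisely why part~1 needs $NH(X\times X)$ as an extra input. You should replace the ``weight and purity'' paragraph by this trace argument; as written, part~2 is not proved.
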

Part 2) of the theorem answers affirmatively a question posed in \cite{esnault-srinivas} (see Section 6.3 therein).  In the case $X$ is defined on a finite field, part 1) of Theorem \ref{Theorem1} holds unconditionally as well. 
\begin{theorem}
Let $\mathbb{K}=\overline{\mathbb{F}_p}$, the algebraic closure of a finite field $\mathbb{F}_p$. Let $X$ be a smooth projective variety over $\mathbb{K}$, and $f:X\rightarrow X$ a correspondence. Then,
\begin{eqnarray*}
\chi _i(f)\leq \max _{p=0,\ldots ,\dim (X)}\lambda _p(f),
\end{eqnarray*}
for all $i=0,\ldots ,2\dim (X)$.
\label{Theorem6}\end{theorem}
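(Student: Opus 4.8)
The plan is to reduce everything to a finite field and then read off the eigenvalue bounds on cohomology from the growth of intersection numbers of the iterates $\Gamma_{f^k}$ against graphs of powers of Frobenius, using Deligne's purity theorem as the essential input. First I would spread out: $X$ and the cycle defining $f$ involve only finitely many equations, so there are a finite subfield $\mathbb{F}_q\subset\overline{\mathbb{F}_p}$, a smooth projective $X_0/\mathbb{F}_q$ and a correspondence $f_0$ on $X_0$ base-changing to $(X,f)$; this changes none of the numbers in question, and $\chi_i(f)=\rho\bigl(f^*\mid H^i(X,\mathbb{Q}_l)\bigr)$ since $(f^n)^*=(f^*)^n$ on cohomology. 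Fixing an embedding $\overline{\mathbb{Q}_l}\hookrightarrow\mathbb{C}$ to measure absolute values, and writing $n=\dim X$ and $\lambda=\max_{0\le p\le n}\lambda_p(f)$, it then suffices to bound every eigenvalue of $f^*$ on every $H^i(X,\mathbb{Q}_l)$ by $\lambda$.

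Let $F$ be the operator on $H^*(X,\mathbb{Q}_l)$ coming from the graph of the $q$-power Frobenius of $X_0$; it commutes with $f^*$ because $f_0$ is defined over $\mathbb{F}_q$. For $k,l\ge 0$ I would set $N_{k,l}:=\sum_{i=0}^{2n}(-1)^i\operatorname{tr}\bigl((f^*)^kF^{\,l}\mid H^i(X,\mathbb{Q}_l)\bigr)$. By the Lefschetz--K\"unneth trace formula for correspondences --- which is formal from Poincar\'e duality and holds over any algebraically closed field --- $N_{k,l}$ is the intersection number in $X\times X$ of $\Gamma_{f^k}$ with (the transpose of) the graph $\Gamma_{\mathrm{Fr}^l}$, two cycles of complementary dimension, so $|N_{k,l}|\le\deg(\Gamma_{f^k})\cdot\deg(\Gamma_{\mathrm{Fr}^l})$ for a fixed polarization on $X\times X$ by a B\'ezout-type bound. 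Since $(\mathrm{Fr}^l)^*$ multiplies the hyperplane class by $q^l$ one gets $\deg(\Gamma_{\mathrm{Fr}^l})=O(q^{ln})$, while the growth estimates of \cite{truong9} for $(f^k)^*$ acting on $N^*(X)$ give $\deg(\Gamma_{f^k})=O(k^c\lambda^k)$ for some $c$; hence $|N_{k,l}|\le C\,k^c\lambda^kq^{ln}$ for all $k,l$.

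Now Deligne enters: his proof of Weil's Riemann hypothesis gives that $H^i(X,\mathbb{Q}_l)$ is pure of weight $i$, so every eigenvalue $\beta$ of $F$ on $H^i$ has $|\beta|=q^{i/2}$, whence the degree $i(\beta)$ is recovered from $|\beta|$ and distinct degrees cannot cancel in $N_{k,l}$. For fixed $k$ the sequence $l\mapsto N_{k,l}$ satisfies a linear recurrence whose characteristic polynomial --- the product of the characteristic polynomials of $F$ on the $H^i$ --- is independent of $k$; writing $N_{k,l}=\sum_\beta\beta^{\,l}P^{(k)}_\beta(l)$ and solving the fixed invertible generalized Vandermonde system expressing the (boundedly many) coefficients of the $P^{(k)}_\beta$ through $N_{k,0},\dots,N_{k,D-1}$ (with $D$ the total Betti number), those coefficients are $O(k^c\lambda^k)$ by the bound of the previous step. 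The coefficient of $\beta^{\,l}l^0$ is $(-1)^{i(\beta)}\operatorname{tr}\bigl((f^*)^k\mid H^i_\beta\bigr)$ for the $f^*$-stable generalized $\beta$-eigenspace $H^i_\beta$ of $F$, so $\bigl|\operatorname{tr}\bigl((f^*)^k\mid H^i_\beta\bigr)\bigr|=O(k^c\lambda^k)$ for all $i,\beta,k$. Finally, for any operator $A$ one has $\limsup_k|\operatorname{tr}(A^k)|^{1/k}=\rho(A)$: writing $\operatorname{tr}(A^k)=\rho(A)^k\sum_{|\mu|=\rho(A)}m_\mu(\mu/\rho(A))^k+o(\rho(A)^k)$, the Ces\`aro average of $\bigl|\sum_{|\mu|=\rho(A)}m_\mu(\mu/\rho(A))^k\bigr|^2$ over $k$ tends to $\sum m_\mu^2\ge 1$, so that sum cannot tend to $0$. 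Applying this to $A=f^*\mid H^i_\beta$ gives $\rho(f^*\mid H^i_\beta)\le\lambda$, hence $\chi_i(f)=\rho(f^*\mid H^i)=\max_\beta\rho(f^*\mid H^i_\beta)\le\lambda=\max_{0\le p\le n}\lambda_p(f)$.

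The hard part, and the only place where Deligne's theorem is genuinely used, is the separation of cohomological degrees: in the bare Lefschetz number $N_{k,0}$ a large eigenvalue of $f^*$ occurring in both an even and an odd cohomology group is invisible, and it is precisely to prevent this that one twists by powers of Frobenius and invokes purity to tag each $H^i$ by the modulus $q^{i/2}$; after that the B\'ezout bound, the estimate $\deg\Gamma_{f^k}=O(k^c\lambda^k)$ from \cite{truong9}, and the Vandermonde/$\limsup$ bookkeeping are routine. Two minor points to keep honest: the eigenvalues of $f^*$ lie a priori only in $\overline{\mathbb{Q}_l}$ and are handled through the chosen archimedean place, and ``composition of correspondences $\leftrightarrow$ composition of operators on cohomology'' is used in the form set up in Section \ref{Section1}. (Conceptually, this argument shows that over $\overline{\mathbb{F}_p}$ the classes $[\Gamma_{f^k}]$ have the same numerical and cohomological growth rates, which is the effect the hypothesis $NH(X\times X)$ produces in Theorem \ref{Theorem1}(1).)
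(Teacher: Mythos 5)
Your proposal has a genuine gap, and it sits exactly at the step you dismiss as a ``minor point to keep honest.'' You assume $(f^n)^*=(f^*)^n$ on cohomology, deduce $\chi_i(f)=\rho\bigl(f^*\mid H^i(X,\mathbb{Q}_l)\bigr)$, and later identify $\sum_i(-1)^i\operatorname{tr}\bigl((f^*)^kF^{l}\mid H^i\bigr)$ with an intersection number of $\Gamma_{f^k}$ against the transposed graph of $\mathrm{Fr}^l$. But Theorem~\ref{Theorem6} is stated for \emph{correspondences}, and the composition convention for correspondences in this paper (Section~\ref{Section1}) is modelled on that of rational maps, so that $(f^2)^*\neq(f^*)^2$ in general. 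The standard quadratic Cremona involution of $\mathbb{P}^2$ is the paper's own explicit example: there $(f^2)^*$ is the identity on $H^2(\mathbb{P}^2)$ while $(f^*)^2$ is multiplication by $4$. Consequently $\chi_i(f)=\limsup_n\|(f^n)^*\|^{1/n}$ is a $\limsup$ over a structurally unrelated sequence of operators rather than a Gelfand limit for a single $f^*$; the Lefschetz--K\"unneth formula applied to $\Gamma_{f^k}$ computes $\operatorname{tr}\bigl((f^k)^*F^l\mid H^i\bigr)$, not $\operatorname{tr}\bigl((f^*)^kF^l\mid H^i\bigr)$; and the closing device $\limsup_k|\operatorname{tr}(A^k)|^{1/k}=\rho(A)$ with $A=f^*\mid H^i_\beta$ no longer applies, because after correction what your Vandermonde bookkeeping bounds is $\operatorname{tr}\bigl((f^k)^*\mid H^i_\beta\bigr)$, which is not $\operatorname{tr}(A^k)$ for any fixed operator $A$.

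The paper's proof is organized precisely to navigate this decoupling. It first proves a one-shot Claim: for a single dominant correspondence $g$ on $X/\overline{\mathbb{F}_p}$, the spectral radius of $g^*$ on $H^i(X,\mathbb{Q}_l)$ is at most $C\deg\Gamma_g$ with $C$ independent of $g$. The cohomological powers $(g^*)^n$ do appear in proving this Claim, but they are never identified with $\Gamma_{g^n}$: instead $(g^*)^n$ is expressed as a difference of effective-cycle pullbacks $(g_n^+)^*-(g_n^-)^*$ with $\deg g_n^{\pm}\leq (2C)^n(\deg\Gamma_g)^n$ (iterated use of Lemma~2.2 of \cite{truong9}), and the generalized Lefschetz trace formula against the algebraic Katz--Messing projector $p_i(\widetilde{Fr})^*\Delta$ controls $|\operatorname{tr}((g^*)^n\mid H^i)|$. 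Only then is the Claim applied separately to each correspondence iterate $g=f^n$. Your purity-plus-Vandermonde separation of cohomological degrees is a legitimate alternative to the Katz--Messing projector device --- both consume Deligne's theorem, yours through eigenvalue moduli, theirs through algebraic idempotents --- and since the problematic identification does hold for regular morphisms, your argument would give an honest proof of Theorem~\ref{Theorem1}(2). For Theorem~\ref{Theorem6}, however, you would need to import something like the paper's controlled-degree decomposition of $(f^*)^n$, or another way to compare $(f^*)^n$ with $(f^n)^*$, before any of the later bookkeeping can even begin.
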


As a consequence of Theorems \ref{Theorem1} and \ref{Theorem6}, the algebraic entropy $\log \chi (f)$ of a dominant endomorphism is both a birational invariant and \'etale invariant. More generally, we have the following result, which is an analog of a classical result of Bowen on topological entropy of continuous dynamical systems on compact metric spaces (Theorem 17 in \cite{bowen}). By the proof of the consequence,  provided $\mathbb{K}=\overline{F}_p$ or the Fundamental Conjecture D holds, the same conclusion holds for rational maps and a slightly weaker conclusion holds for all correspondences. 
\begin{corollary}
Let $X,Y$ be smooth projective varieties over $\mathbb{K}$ of the same dimension,  $f:X\rightarrow X$ and $g:Y\rightarrow Y$ dominant {\bf regular morphisms}. Assume that there is a dominant rational map (necessarily has generic finite fibres) $\pi :X\rightarrow Y$ so that $\pi \circ f=g\circ \pi$. Then 
\begin{eqnarray*}
\max _{i=0,\ldots ,2\dim (X)}\chi _i(f)=\max _{i=0,\ldots ,2\dim (X)}\chi _i(g).
\end{eqnarray*}
\label{Corollary1}\end{corollary}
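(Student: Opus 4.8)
The plan is to move the statement from the cohomological quantities $\chi_i$ to the numerical dynamical degrees $\lambda_p$, where the hypothesis that $\pi$ is generically finite is easy to exploit, and then to carry the resulting equality back up. Since $f$ and $g$ are dominant regular morphisms, part~2) of Theorem~\ref{Theorem1} gives $\chi_i(f)\le\max_{p}\lambda_p(f)$ for all $i$; combined with the trivial bound $\chi_{2p}(f)\ge\lambda_p(f)$ this yields
\begin{eqnarray*}
\max_{i=0,\ldots,2\dim(X)}\chi_i(f)=\max_{p=0,\ldots,\dim(X)}\lambda_p(f),\qquad\max_{i=0,\ldots,2\dim(Y)}\chi_i(g)=\max_{p=0,\ldots,\dim(Y)}\lambda_p(g).
\end{eqnarray*}
So it suffices to prove $\lambda_p(f)=\lambda_p(g)$ for every $p$.

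One inequality is formal. As $\pi$ is dominant and $\dim(X)=\dim(Y)$, it is generically finite, so on $N^*$ we have a pullback $\pi^*$ and a codimension-preserving pushforward $\pi_*$ with $\pi_*\circ\pi^*=(\deg\pi)\cdot\mathrm{id}$ by the projection formula; in particular $\pi^*$ is injective. Because $f$ and $g$ are \emph{morphisms}, the relation $\pi\circ f=g\circ\pi$ of rational maps forces the identity $f^*\circ\pi^*=\pi^*\circ g^*$ of actions on $N^*$ (the relevant cycle-theoretic compositions are proper since one factor is regular; see Section~\ref{Section1}), hence inductively $(f^n)^*\circ\pi^*=\pi^*\circ(g^n)^*$. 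Thus $\pi^*N^p(Y)$ is an $f^*$-invariant subspace of $N^p(X)$ on which $f^*$ is similar, via $\pi^*$, to $g^*|_{N^p(Y)}$, and consequently $\lambda_p(f)\ge\lambda_p(g)$ for all $p$.

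The reverse inequality $\lambda_p(f)\le\lambda_p(g)$ is the substantive point, and it is here that genericity of the fibres of $\pi$ is used essentially. I would invoke the product formula for the numerical dynamical degrees attached to the semi-conjugacy $\pi\circ f=g\circ\pi$: there are relative dynamical degrees $\lambda_j(f|\pi)$ for $0\le j\le\dim(X)-\dim(Y)$, normalized so that $\lambda_0(f|\pi)=1$, with $\lambda_p(f)=\max_{i+j=p}\lambda_i(g)\,\lambda_j(f|\pi)$. When $\dim(X)=\dim(Y)$ the generic fibre of $\pi$ is $0$-dimensional, only the term $j=0$ survives, and the formula degenerates to $\lambda_p(f)=\lambda_p(g)$, which together with the reduction above completes the proof. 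If one prefers to avoid the relative formalism, the same bound should follow directly from the argument underlying it: pullback by the morphism $f$ preserves the cone of effective cycles, and---the generic fibre of $\pi$ being a point---the $p$-th degree $((f^n)^*A^p)\cdot A^{\dim(X)-p}$ of a fixed ample class $A$ on $X$ is comparable, after transport through $\pi$, to the corresponding degree of $g^n$ with respect to an ample class on $Y$.

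I expect the product formula, equivalently the generically-finite invariance of the $\lambda_p$, to be the only non-formal obstacle. Over $\mathbb{C}$ it rests on the calculus of positive closed currents, which is unavailable in positive characteristic; its substitute is the structure of the effective and nef cones of $N^*(X)$, whose compatibility with \emph{morphisms} is precisely why the conclusion is unconditional for regular $f$ and $g$, while for merely rational self-maps one must additionally assume $\mathbb{K}=\overline{\mathbb{F}_p}$ or the Fundamental Conjecture~D so as to recover the missing positivity through the equality $\chi=\lambda$---as recorded in the remark following the corollary. A minor technical caveat is that $\pi$ itself is only rational, so every composition above is to be read in the correspondence framework of Section~\ref{Section1}; these stay transparent here exactly because $f$ and $g$ are regular.
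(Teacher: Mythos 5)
Your proof follows the same route as the paper's: reduce $\max_i\chi_i$ to $\max_p\lambda_p$ via part~2) of Theorem~\ref{Theorem1}, then appeal to the invariance of the geometric dynamical degrees under generically finite semi-conjugacy, which is precisely the content of Theorems~1.1 and~1.3 of \cite{truong9} that the paper cites. You fill in more detail than the paper does, but the ingredients and their order are the same.

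One small caution about a step that you present as ``formal'': the identity $f^*\circ\pi^*=\pi^*\circ g^*$ on $N^*$ is not automatic when $\pi$ is merely rational, even with $f$ and $g$ regular. The failure of $(h^2)^*=(h^*)^2$ for rational $h$ (the algebraic-instability phenomenon recalled in Section~\ref{Section1}) is exactly a warning that pullback of a composition of correspondences need not equal the composition of pullbacks; the correspondence-composition $\Gamma_{\pi\circ f}$ is a strict transform, not the cycle-theoretic product, and establishing the identity requires an argument (essentially contained in \cite{truong9}) rather than a one-line appeal to the projection formula. This does not damage the proof — the equi-dimensional case of the product formula already yields $\lambda_p(f)=\lambda_p(g)$ in both directions, so your first inequality is redundant — but as written it overstates the triviality of that step.
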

\begin{proof}
This follows from the corresponding properties for the geometric dynamical degrees $\lambda _i(f)$ and $\lambda _i(g)$ (Theorems 1.1 and 1.3 in \cite{truong9}) and part 2 of Theorem \ref{Theorem1} above.  
\end{proof}

 As another consequence of Theorems \ref{Theorem1} and \ref{Theorem6}, we answer Question 2 for a large class of correspondences on surfaces. 
\begin{theorem}
1) Let $X$ be a smooth projective surface over $\mathbb{K}$, and $f:X\rightarrow X$ a dominant correspondence with $\lambda _1(f)\geq \max \{\lambda _0(f),\lambda _2(f)\}$. Assume that $NH(X\times X)$ holds. Then $\chi _2(f)=\lambda _1(f)$. Moreover, $\max \{\chi _1(f),\chi _3(f)\}\leq \lambda _1(f)$. 

1') Let $X$ be a smooth projective surface over $\mathbb{K}$, and $f:X\rightarrow X$ a dominant correspondence with $\lambda _1(f)\geq \max \{\lambda _0(f),\lambda _2(f)\}$. Assume that $\mathbb{K}=\overline{\mathbb{F}_p}$ is the closure of a finite field $\mathbb{F}_p$. Then $\chi _2(f)=\lambda _1(f)$. Moreover, $\max \{\chi _1(f),\chi _3(f)\}\leq \lambda _1(f)$. 

2) Let $X$ be a smooth projective surface over $\mathbb{K}$, and $f:X\rightarrow X$ a dominant {\bf regular morphism} with $\lambda _1(f)\geq \max \{\lambda _0(f),\lambda _2(f)\}$. Then $\chi _2(f)=\lambda _1(f)$. Moreover, $\max \{\chi _1(f),\chi _3(f)\}\leq \lambda _1(f)$. 
\label{Theorem2}\end{theorem}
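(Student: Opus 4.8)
The plan is to deduce Theorem \ref{Theorem2} directly from Theorems \ref{Theorem1} and \ref{Theorem6} together with the trivial inequality $\chi_{2i}(f)\geq \lambda_i(f)$ noted in the introduction, and from the basic structure of $l$-adic cohomology of a surface $X$ (namely $H^i(X,\mathbb{Q}_l)=0$ for $i<0$ and $i>4$, with Poincaré duality pairing $H^i$ with $H^{4-i}$). I would treat the three parts in parallel, since parts 1), 1'), 2) differ only in which of the earlier theorems supplies the upper bound: part 1) uses Theorem \ref{Theorem1}(1) under the hypothesis $NH(X\times X)$, part 1') uses Theorem \ref{Theorem6} (where the hypothesis $\mathbb{K}=\overline{\mathbb{F}_p}$ makes the $l$-adic bound unconditional), and part 2) uses Theorem \ref{Theorem1}(2) for a regular morphism.

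First I would record that, by whichever of Theorems \ref{Theorem1} and \ref{Theorem6} applies under the stated hypotheses, one has $\chi_i(f)\leq \max_{0\leq p\leq 2}\lambda_p(f)$ for all $i=0,\ldots,4$. By the standing assumption $\lambda_1(f)\geq\max\{\lambda_0(f),\lambda_2(f)\}$, this maximum equals $\lambda_1(f)$, so $\chi_i(f)\leq\lambda_1(f)$ for every $i$. In particular $\chi_1(f)\leq\lambda_1(f)$ and $\chi_3(f)\leq\lambda_1(f)$, which gives the "Moreover" clause in each of the three parts at once; and also $\chi_2(f)\leq\lambda_1(f)$. For the reverse inequality $\chi_2(f)\geq\lambda_1(f)$ I would invoke the elementary bound $\chi_{2i}(f)\geq\lambda_i(f)$ with $i=1$, which holds in complete generality because $N^1(X)\otimes\mathbb{Q}_l$ injects into $H^2(X,\mathbb{Q}_l)$ compatibly with the pullback action (the cycle class map commutes with $f^*$), so the spectral radius on the larger space dominates that on the invariant subspace. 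Combining the two inequalities yields $\chi_2(f)=\lambda_1(f)$, which is the main assertion.

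I do not expect a genuine obstacle here: the theorem is essentially a packaging of the already-established inequalities. The only point requiring a little care is the claim $\chi_{2}(f)\geq\lambda_1(f)$ for a \emph{correspondence} (not just a morphism): one must check that the cycle class map $N^1(X)\to H^2(X,\mathbb{Q}_l)$ is injective after tensoring with $\mathbb{Q}_l$ and intertwines the correspondence action on $N^*$ with the correspondence action on $H^*$. Injectivity of $N^1\hookrightarrow H^2$ modulo torsion is standard (it is part of the construction of $N^1$ via the cycle class map and is already used in Section \ref{Section1}), and compatibility of the actions is the functoriality of cycle classes under correspondences; both facts are available in the framework set up earlier in the paper. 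Once these are in place the proof is the two-line argument above, applied three times.
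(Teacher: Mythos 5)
Your proposal is correct and is essentially the same argument the paper gives: apply Theorem \ref{Theorem1} (parts 1) or 2)) or Theorem \ref{Theorem6} according to the hypothesis, note that under $\lambda_1(f)\geq\max\{\lambda_0(f),\lambda_2(f)\}$ the upper bound collapses to $\lambda_1(f)$, and pair this with the elementary lower bound $\chi_2(f)\geq\lambda_1(f)$. One tiny remark on the last point: you justify $\chi_2(f)\geq\lambda_1(f)$ by asserting that $N^1(X)\otimes\mathbb{Q}_l$ injects into $H^2(X,\mathbb{Q}_l)$; that is true for divisors (numerical and homological equivalence agree in codimension one), but the cleaner and fully general reason the paper relies on is that numerical equivalence is coarser than homological, so $N^i(X)\otimes\mathbb{Q}_l$ is an $f^*$-equivariant quotient of the image $H^{2i}_{alg}(X,\mathbb{Q}_l)$ of the cycle class map, which in turn is an $f^*$-stable subspace of $H^{2i}(X,\mathbb{Q}_l)$; this gives $\chi_{2i}(f)\geq\lambda_i(f)$ for every $i$ without any appeal to an identification of $N^i$ with a subspace of cohomology.
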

If $f$ is an automorphism (or more generally, a birational map) then $\lambda _0(f)=\lambda _2(f)=1$ and $\lambda _1(f)\geq 1$. Hence, parts 1') and  2) of our theorem applies. Note that this case, i.e. $f$ is an automorphism of a surface, was solved by Esnault and Srinivas in \cite{esnault-srinivas}. Their proof makes use of the classification of surfaces and is not purely algebraic (because at some part of the proof, they need to use the lifting to characteristic 0, and use the known results in that case). They also mentioned an algebraic proof of their result, suggested by P. Deligne, under the assumption that the standard conjectures hold.  

{\bf Remark.} By the results in \cite{katz-messing}, all the above results are valid for any Weil's cohomology theory. 

Some other results related to Questions 2 and 3 will be proven in the last section of this paper. The last main result concerns the relation between the above questions and the Weil's Riemann hypothesis. 
\begin{theorem}
If Question 2 or Question 3 or Question 4 has an affirmative answer then we obtain a simple new proof of the Weil's Riemann hypothesis. 
\label{Theorem4}\end{theorem}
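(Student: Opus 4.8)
The plan is to reduce Weil's Riemann hypothesis to a one-sided bound on the numbers $\chi_j$ of a single, very special regular morphism --- the Frobenius --- and then feed in whichever of Questions 2, 3, 4 is assumed. Let $X_0$ be a smooth projective variety over a finite field $\mathbb{F}_q$ (after a finite extension of $\mathbb{F}_q$ we may take $X_0$ geometrically irreducible), put $X=X_0\times_{\mathbb{F}_q}\overline{\mathbb{F}_q}$, $n=\dim X$, and let $\phi:X\rightarrow X$ be the geometric Frobenius, a finite --- hence dominant regular --- morphism of degree $q^n$. Two elementary facts drive everything. First, since $\phi$ is a morphism one has $(\phi^m)^*=(\phi^*)^m$, so $\chi_j(\phi)$ is just the spectral radius of $\phi^*$ on $H^j(X,\mathbb{Q}_l)$; by Grothendieck's Lefschetz trace formula its eigenvalues are exactly the Frobenius eigenvalues of $X_0$ in degree $j$, and Weil's Riemann hypothesis is the statement that each has absolute value $q^{j/2}$ (here we fix an embedding $\overline{\mathbb{Q}_l}\hookrightarrow\mathbb{C}$ to speak of absolute values; the argument runs for each such embedding, and the remaining bookkeeping --- rationality and $l$-independence of the characteristic polynomials, passage to all archimedean places --- is the standard endgame of \cite{deligne1}). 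Second, $\lambda_i(\phi)=q^i$: the geometric Frobenius scales $N^1(X)$ by $q$, and more generally acts on $N^i(X)$ as $q^i$ times a finite-order operator, so $\phi^*$ has spectral radius $q^i$ there; in particular $\max_p\lambda_p(\phi)=q^n$.

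Next I would record the reduction. By Poincar\'e duality the cup-product pairing $H^j(X,\mathbb{Q}_l)\times H^{2n-j}(X,\mathbb{Q}_l)\rightarrow H^{2n}(X,\mathbb{Q}_l)$ is perfect and $\phi$-equivariant, with $\phi^*$ acting on $H^{2n}$ by $q^n$; hence the eigenvalues of $\phi^*$ on $H^j$, with multiplicity, are exactly the numbers $q^n/\beta$ as $\beta$ runs over the eigenvalues on $H^{2n-j}$. Therefore, if one establishes the one-sided bound $\chi_j(\phi)\le q^{j/2}$ for all $j=0,\dots,2n$, then any eigenvalue $\alpha$ on $H^j$ satisfies both $|\alpha|\le q^{j/2}$ and, by the bound in degree $2n-j$, $|\alpha|=q^n/|\beta|\ge q^n/q^{(2n-j)/2}=q^{j/2}$; so $|\alpha|=q^{j/2}$. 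Thus it suffices to prove $\chi_j(\phi)\le q^{j/2}$ for every $j$.

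Now the assumed answer does the work. If Question 4 holds this is immediate: $\chi_j(\phi)^2\le\max_{p+q=j}\lambda_p(\phi)\lambda_q(\phi)=\max_{p+q=j}q^{p+q}=q^j$. If Question 2 holds, then for even $j=2i$ we get $\chi_{2i}(\phi)=\lambda_i(\phi)=q^i$ on the nose; for odd $j=2i+1$ pass to $X\times X$ with $\phi\times\phi$, which is itself the geometric Frobenius of $X_0\times_{\mathbb{F}_q}X_0$, so $\lambda_{2i+1}(\phi\times\phi)=q^{2i+1}$, while K\"unneth embeds $H^{2i+1}(X,\mathbb{Q}_l)^{\otimes 2}$ into $H^{2(2i+1)}(X\times X,\mathbb{Q}_l)$ as a $(\phi\times\phi)^*$-submodule on which the action is $\phi^*\otimes\phi^*$; hence $\chi_{2i+1}(\phi)^2\le\chi_{2(2i+1)}(\phi\times\phi)=\lambda_{2i+1}(\phi\times\phi)=q^{2i+1}$ by Question 2 for $\phi\times\phi$, i.e. $\chi_{2i+1}(\phi)\le q^{(2i+1)/2}$. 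If instead only Question 3 (the product formula, together with a well-behaved theory of relative dynamical degrees) is granted, I would induct on $n$, the case $n\le 1$ being Weil's theorem for curves: take a Lefschetz pencil over $\mathbb{F}_q$ (after a harmless finite extension), giving a morphism $\sigma:\widetilde{X}\rightarrow\mathbb{P}^1$ with $\widetilde{X}$ the blow-up of $X$ along the smooth $(n-2)$-dimensional base locus $A$, general fibres of dimension $n-1$, and a compatible lift $\widetilde{\phi}$ over the Frobenius of $\mathbb{P}^1$ (for which $\chi_0=1$, $\chi_2=q$); the product formula becomes $\chi_{2p}(\widetilde{\phi})=\max\{\chi_{2p}(\widetilde{\phi}|\sigma),\,q\,\chi_{2p-2}(\widetilde{\phi}|\sigma)\}$, one identifies $\chi_{2j}(\widetilde{\phi}|\sigma)$ with the $\chi_{2j}$ of the Frobenius on a $\phi$-periodic fibre --- equal to $q^j$ by the inductive hypothesis applied to that $(n-1)$-dimensional fibre --- hence $\chi_{2p}(\widetilde{\phi})=q^p$, and the blow-up formula $H^*(\widetilde{X},\mathbb{Q}_l)\cong H^*(X,\mathbb{Q}_l)\oplus H^{*-2}(A,\mathbb{Q}_l)(-1)$ together with the inductive hypothesis for $A$ transfers the bound to $X$.

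Essentially all of the work --- and the main obstacle --- lies in the Question 3 route. There one must genuinely \emph{construct} the relative dynamical degrees $\chi_{2j}(f|\pi)$ in positive characteristic, prove the product formula, and identify $\chi_{2j}(\widetilde{\phi}|\sigma)$ with the fibrewise Frobenius dynamics so that the induction closes; moreover the product formula is stated only in even degrees, so the odd (equivalently, middle) cohomology must be handled separately --- which is precisely where the difficulty of Deligne's original proof sits and presumably requires the same ``even tensor power'' device, most naturally in a sheaf-theoretic incarnation, since passing to self-products of $X$ raises the dimension and breaks the induction. By contrast the Question 4 route is immediate, and the Question 2 route needs only the elementary K\"unneth manipulation on $X\times X$; these give the simplest proofs the theorem refers to. Throughout, the one genuine pitfall to check is that none of the inputs used --- $\lambda_i(\phi)=q^i$, the $l$-adic K\"unneth and Poincar\'e-duality formalism, the properties of the $\lambda_i$ from \cite{truong9} --- itself depends on Weil's Riemann hypothesis, which indeed it does not.
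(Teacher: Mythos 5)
Your reduction (Poincar\'e duality to get the one-sided bound, passage to products of spaces to dispose of odd degrees, and the computation $\lambda_i(Fr)=q^i$) is exactly the paper's, and your Question~2 and Question~4 branches coincide with the paper's Claims~1 and~3: apply the assumed identity or inequality to the Frobenius and read off $\chi_{2i}(Fr)=q^i$, resp.\ $\chi_j(Fr)\le q^{j/2}$. Those two branches are complete.

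The Question~3 branch is where you diverge, and as you yourself concede, your version does not close. You fibre $X$ over $\mathbb{P}^1$ by a Lefschetz pencil and induct on dimension, which forces you to (i) identify the relative degrees $\chi_{2j}(\widetilde{\phi}\,|\,\sigma)$ with the cohomological degrees of Frobenius on a fibre, and (ii) treat the middle/odd cohomology separately. Neither of these is granted by an affirmative answer to Question~3 as posed: the question only asserts the existence of \emph{some} relative degrees satisfying the product formula, not their fibrewise interpretation (which even over $\mathbb{C}$ is only known at ``good'' periodic points). So that branch has a genuine gap. The paper's route is much lighter and avoids the issue entirely: by Noether normalisation there is a \emph{finite} surjective morphism $\pi:X\to\mathbb{P}^k$ with $k=\dim X$, defined over $\mathbb{F}_q$ and hence satisfying $\pi\circ Fr_X=Fr_{\mathbb{P}^k}\circ\pi$. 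In this equi-dimensional situation the only relative degree occurring in the product formula is $\chi_0(Fr_X|\pi)$, which equals $1$ by applying the formula in degree $0$; hence $\chi_{2i}(Fr_X)=\chi_{2i}(Fr_{\mathbb{P}^k})=q^i$, the right-hand side being elementary since $H^*(\mathbb{P}^k,\mathbb{Q}_l)$ is generated by algebraic cycles. No induction on dimension, no Lefschetz pencil, and no identification of relative degrees with fibre dynamics is needed. If you replace your pencil argument by this finite projection to $\mathbb{P}^{\dim X}$, your proof of the Question~3 branch becomes complete and matches the paper's Claim~2.
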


\subsection{Plan of the paper} Some background materials are collected in Section \ref{Section1}. In Section \ref{Section2} we prove Theorems \ref{Theorem1}, \ref{Theorem6} and \ref{Theorem2}. In Section \ref{Section3} we prove Theorem \ref{Theorem4}. In the last section we discuss an approach toward solving Questions 2 and 3, the main result in that section is Theorem \ref{Theorem3}.

Two main ideas are used throughout the paper. The first one is that by working on $X\times X$, some questions about pulling back of a correspondence $f:X\rightarrow X$ on $l$-adic cohomology groups may be reduced to questions about algebraic cycles only. The second one is that given a dominant correspondence $f:X\rightarrow X$ and a dominant regular morphism with {\bf finite} fibres $\pi :X\rightarrow Y$, we can consider the pushforward $g_n=\pi _*(f^n):Y\rightarrow Y$ to study the dynamics of $f$. 

{\bf Acknowledgments.} The author was very much benefited from the invaluable and generous help of and inspiring discussions with Peter O'Sullivan, to whom he gratefully expresses his thankfulness. We are indebted to H\'el\`ene Esnault and Keiji Oguiso for their interest in the paper and important corrections, to them and Tien-Cuong Dinh for helpful comments and suggestions, which greatly improved the presentation of the paper. In particular, H\'el\`ene Esnault's questions and information to us helped to clarify many points in the proofs of the results.  Part of Theorem \ref{Theorem4}, on the relation between Question 2 and the Weil's Riemann hypothesis, was presented in the author's talk at the conference "Geometry at the ANU, August 2016". He would like to thank the organisers of the conference for the invitation and hospitality. 

\section{Preliminaries}
\label{Section1}

In this section, we recall some backgrounds on correspondences and dynamical degrees. 

\subsection{A brief summary on correspondences}
Let $\mathbb{K}$ be a field and $X,Y$ irreducible (not necessarily smooth or projective) varieties. A {correspondence}  $f:X\rightarrow Y$ is given by an algebraic cycle $\Gamma _f=\sum _{i=1}^m\Gamma _i$ on $X\times Y$, where $m$ is a positive integer and $\Gamma _i\subset X\times Y$ are irreducible subvarieties of dimension exactly $\dim (X)$. We do not assume that $\Gamma _i$ are distinct, and hence may write the above sum as $\sum _{j}a_j\Gamma _j$ where $\Gamma _j$ are distinct and $a_j$ are positive integers. We will call $\Gamma _f$ the graph of $f$, by abusing the usual notation when $f$ is a rational map.  If $f$ is a correspondence and $a\in \mathbb{N}$, we denote by $af$ the correspondence whose graph is $a\Gamma _f$. In other words, if $\Gamma _f=\sum _{i}\Gamma _i$ then $\Gamma _{af}=\sum _{i}a\Gamma _i$.  If $\Gamma _f=a\Gamma$ where $\Gamma$ is irreducible and $a\in \mathbb{N}$, we say that the correspondence $f$ is {irreducible}. A  rational map $f$ is an irreducible correspondence, since its graph is irreducible. A correspondence is dominant if for each $i$ in the sum, the two natural projections from $\Gamma _i$ to $X,Y$ are dominant. Dominant correspondences can be composed and the resulting correspondence is also dominant. Given two dominant correspondences $f:X\rightarrow X$ and $g:Y\rightarrow Y$, we say that they are semi-conjugate if there is a dominant rational {\bf map} $\pi :X\rightarrow Y$ such that $\pi \circ f=g\circ \pi$. We will simply write $\pi :(X,f)\rightarrow (Y,g)$ to mean that $\pi$ is a dominant rational map semi-conjugating $(X,f)$ and $(Y,g)$. 

Let $\pi :X\rightarrow Y$ be a dominant {\bf regular morphism} with {\bf finite} fibres,  $f:X\rightarrow X$ and $g:Y\rightarrow Y$ dominant correspondences. We define $\pi ^*(g)$ to be the correspondence on $X$ whose graph is $(\pi \times \pi )^{*}(\Gamma _g)$, and define $\pi _*(f)$ to be the correspondence on $Y$ whose graph is $(\pi \times \pi )_*(\Gamma _f).$

{\bf Remarks.} If $f:X\rightarrow X$ is a correspondence, then we can define pullback and pushforward of algebraic cycles and cohomology classes in the following way. Let $pr_1,pr_2:X\times X\rightarrow X$ be the projections. Then 
\begin{eqnarray*}
f^*(\alpha )&:=&(pr _1)_*[pr_2^*(\alpha ).\Gamma _f],\\
f_*(\alpha )&:=&(pr _2)_*[pr_1^*(\alpha ).\Gamma _f].
\end{eqnarray*}
Note that (in contrast to a more common use of correspondences in Algebraic Geometry), the definition of compositions of dominant correspondences in this paper is modelled after that of the compositions of rational maps. Therefore, in general we have $(f^2)^*\not= (f^*)^2$, and so on. This phenomenon of non-compatibility between pullback and iterate was first studied on projective spaces in \cite{fornaess-sibony}, under the name of algebraic instability. One simple example is that of the standard Cremona map $f:\mathbb{P}^2\rightarrow \mathbb{P}^2$ given by the formula: $f[x_0,x_1,x_2]=[x_1x_2:x_2x_0:x_0x_1]$.

\subsection{Relative dynamical degrees on complex projective varieties} One of the main advantages utilised when working in dynamics over the complex field $\mathbb{C}$ is the existence of positive closed forms and currents, and consequently a positivity notion for cohomological classes. 

One important tool in Complex Dynamics is dynamical degrees for dominant meromorphic selfmaps. They are bimeromorphic invariants of a meromorphic selfmap $f:X\rightarrow X$ of a compact K\"ahler manifold $X$. The $p$-th dynamical degree $\lambda _p(f)$  is the exponential growth rate of the spectral radii of the pullbacks $(f^n)^*$ on the Dolbeault  cohomology group $H^{p,p}(X)$. For a surjective holomorphic map $f$, the dynamical degree $\lambda _p(f)$ is simply the spectral radius of $f^*:H^{p,p}(X)\rightarrow H^{p,p}(X)$.  Recall that for a linear map $L$ on a complex vector space, the spectral radius $sp(L)$ is the maximum of the absolute values of eigenvalues of $L$. Fundamental results of Gromov \cite{gromov} and Yomdin \cite{yomdin} expressed the topological entropy of a surjective holomorphic map in terms of its dynamical degrees: $$h_{top}(f)=\log \max _{0\leq p\leq {\dim (X)}}\lambda _{p}(f).$$ Since then, dynamical degrees have played a more and  more important role in dynamics of meromorphic maps. In many results and conjectures in Complex Dynamics in higher dimensions, dynamical degrees play a central role. 

Let $X$ be a compact K\"ahler manifold of dimension $k$ with a K\"ahler form $\omega _X$, and let $f:X\rightarrow X$ be a dominant meromorphic map. For $0\leq p\leq k$, the $p$-th dynamical degree $\lambda _p(f)$ of $f$ is defined as follows 
\begin{equation} 
\lambda _p(f)=\lim _{n\rightarrow\infty}(\int _X(f^n)^*(\omega _X^p)\wedge \omega _X^{k-p})^{1/n}=\lim _{n\rightarrow\infty}r_p(f^n)^{1/n},
\label{Equation01}\end{equation}
where $r_p(f^n)$ is the spectral radius of the linear map $(f^n)^*:H^{p,p}(X)\rightarrow H^{p,p}(X)$. The existence of the limit in the above expression is non-trivial and has been proven by Russakovskii and Shiffman \cite{russakovskii-shiffman} when $X=\mathbb{P}^k$, and by Dinh and Sibony \cite{dinh-sibony10, dinh-sibony1} when $X$ is compact K\"ahler. Both of these results use regularisation of positive closed currents. The limit in (\ref{Equation01}) is important in showing that dynamical degrees are birational invariants. The dynamical degrees satisfy the log-concavity: $\lambda _i(f)\lambda _{i+2}(f)\leq \lambda _{i+1}(f)^2$ for all $i=0,\ldots ,\dim (X)$. This is a consequence of the mixed Hodge-Riemann theorem. Remark: The first dynamical degree $\lambda _1(f)$ was used earlier to study Green currents in complex dynamics (first introduced by N. Sibony), see e.g. \cite{bedford-smillie} for surfaces and \cite{fornaess-sibony} for higher dimensions. 

For meromorphic maps of compact K\"ahler manifolds with invariant fibrations, a more general notion called relative dynamical degrees has been defined by Dinh and Nguyen in \cite{dinh-nguyen}. (Here, by a fibration we simply mean a dominant rational map, without any additional requirements.) The "product formulas" (see below) provide a very useful tool to check whether a meromorphic map is primitive (i.e.  has no invariant fibrations over a base which is of smaller dimension and not a point, see \cite{zhang}). In another direction,  when $\mathbb{K}=\mathbb{C}$,  Dinh and Sibony \cite{dinh-sibony11} defined dynamical degrees and topological entropy for meromorphic correspondences over irreducible varieties. For any dominant correspondence $f$, the following Gromov - Dinh - Sibony's inequality holds:
$$h_{top}(f)\leq \log \max _{0\leq p\leq {\dim (X)}}\lambda _{p}(f).$$

Computations of dynamical degrees of so-called Hurwitz correspondences of the moduli spaces $\mathcal{M}_{0,N}$  were given in \cite{ramadas}, wherein a proof that dynamical degrees of correspondences (over $\mathbb{K}=\mathbb{C}$, and for irreducible varieties) are birational invariants was also given. 

When $\mathbb{K}=\mathbb{C}$, we can use the fact that the cohomological class of a very ample divisor on $X$ represents a K\"ahler form, to deduce that the dynamical degrees defined above can also be computed in terms of algebraic cycles on $X$. 

\subsubsection{Product formula}
Let $f:X\rightarrow X$ and $g:Y\rightarrow Y$ be dominant rational maps, where $X$ and $Y$ are smooth projective varieties. Assume also that there is a dominant rational map $\pi :X\rightarrow Y$ so that $\pi \circ f=g\circ \pi$. Dinh and Nguyen \cite{dinh-nguyen} defined relative dynamical degrees $\lambda _i(f|\pi )$ for $i=0,\ldots ,\dim (X)-\dim (Y)$, which are  birational invariants. In case $Y=$ a point, these relative dynamical degrees are the same as the dynamical degrees mentioned above. Moreover, they also defined relative dynamical degrees in the K\"ahler setting. They proved the following result in the algebraic setting: 

{\bf Product formula.} For all $p=0,\ldots ,\dim (X)$, we have 
\begin{eqnarray*}
\lambda _p(f)=\max _{0\leq i\leq \dim (Y),~0\leq p-i\leq \dim (X)-\dim (Y)}\lambda _i(g)\lambda _{p-i}(f|\pi ).
\end{eqnarray*} 
 
The product formula in the K\"ahler setting for meromorphic maps was proven in \cite{dinh-nguyen-truong1}. We have three special cases. 

{\bf Case 1.} $X=Y\times Z$, $f=g\times h$ is a product map, and $\pi :X=Y\times Z\rightarrow Y$ is the projection onto $Y$. In this case $\lambda _j(f|\pi )=\lambda _j(h)$ for all $j$. Proving the product formula in this case is, via the Kunneth's formula, reduced to simple properties of the eigenvalues of a tensor product of linear maps. 

{\bf Case 2.} Assume that $y^0\in Y$ is a "good" periodic point of order $m$ of $g$. Then $\lambda _j(f|\pi )=\lambda _j(f^m|_{\pi ^{-1}(y^0)})^{1/m}$. This explains the use of the notation and also the intuitive meaning that relative dynamical degrees are the dynamical degrees of the restriction of $f$ on the fibres of $\pi$.

{\bf Case 3.} $\dim (X)=\dim (Y)$ (equi-dimensional). In this case $\lambda _j(f)=\lambda _j(g)$ for all $j$, and the only relative dynamical degree is $\lambda _0(f|\pi )=1$.  

\subsubsection{Dinh's inequality}
We define:
\begin{eqnarray*}
\chi _i(f):=\limsup _{n\rightarrow\infty}\| (f^n)^*|_{H^i(X,\mathbb{C})}\|^{1/n}.
\end{eqnarray*}
Here, we can choose any norm on the finite-dimensional vector space $H^i(X,\mathbb{C})$. From the results mentioned above, $\chi _{2i}(f)=\lambda _i(f)$ and in this case the limsup can be replaced by lim. However, when $i$ is an odd number,  we do not know whether the limsup in the definition can be replaced by lim.  Dinh \cite{dinh} showed the following inequality, by using weakly positive closed smooth forms: 
\begin{eqnarray*}
\chi _i(f)^2\leq \max _{p+q=i}\lambda _p(f)\lambda _q(f). 
\end{eqnarray*}
 
\subsection{Relative dynamical degrees in positive characteristics} One main difficulty in extending the results in the previous section to positive characteristic is that there is not yet a positivity notion on $l$-adic cohomology groups. 

Recently, work on birational maps of surfaces over an algebraically closed field of arbitrary characteristic has become more and more popular. As some examples, we refer the readers  to \cite{esnault-srinivas, xie, blanc-cantat, esnault-oguiso-yu, oguiso}. In these results, (relative) dynamical degrees also play an important role. 

In the case of positive characteristic, positivity notions are not yet available on $l$-adic  cohomology groups. This lets open the question of how to define cohomological dynamical degrees in the case of positive characteristic. In contrast, in \cite{truong9} (part 1 in Theorem 1.1 there)  the author was able to show that the following limits 
\begin{equation}
\lambda _i(f):=\lim _{n\rightarrow\infty}((f^n)^*(H^i).H^{\dim (X)-i})^{1/n},
\label{Equation1}\end{equation}
exist, for all $i=0,\ldots ,\dim (X)$, over an arbitrary field. Here $f$ is a correspondence and $H$ is any very ample divisor on $X$. The definition can also be adapted to the case where $X$ is singular or not irreducible, by using de Jong's alterations and pullbacks of correspondences by equi-dimensional dominant rational maps. Hence it is justified to call these the geometric dynamical degrees. These geometric dynamical degrees are again birational invariants (parts 2 of Theorem 1.1  in \cite{truong9} ). The product formula is also proven in part 4) of Theorem 1.1 in \cite{truong9} in the setting of correspondences.  For some possible applications of these to  topological entropy, in particular the Gromov - Yomdin's theorem, see \cite{truong11}. (After sending out an earlier version of \cite{truong9}, we were informed by Charles Favre that Nguyen-Bac Dang had been developing an alternative approach for (relative) geometric dynamical degrees of rational maps on normal projective varieties.)  

For a {\bf regular morphism} $f$, we declare $\chi _i(f):=$ the spectral radius of the linear map $f^*:H^{i}(X,\mathbb{Q}_l)\rightarrow H^{i}(X,\mathbb{Q}_l)$. Here we use any fixed embedding of $\mathbb{Q}_l$ into $\mathbb{C}$.  (Remark: As a consequence of the Riemann hypothesis for positive characteristic, which was the last and crucial part of the Weil's conjectures and proven by Deligne, this $\chi _i(f)$ is independent of the embedding of $\mathbb{Q}_l$. However, we won't assume this in the below.) Since the $l$-adic cohomology groups are still not well-understood, even computing the $\chi _i(f)$ on surfaces is quite a challenging task in practice. In contrast, as mentioned above, the geometric dynamical degrees $\lambda _i(f)$ have some good functorial properties which make computations easier. For example (see Section \ref{Section3} for more details), computing the geometric dynamical degrees of the Frobenius map on any smooth projective variety $X$ can be done by applying the product formula to a dominant regular morphism $\pi :X\rightarrow \mathbb{P}^k$ with finite fibres, utilising the fact that the dynamical degrees of a regular morphism of $\mathbb{P}^k$ are very easy to describe.     

For a general correspondence, taking the clues from the case $\mathbb{K}=\mathbb{C}$, we may proceed as follows. Define $\chi _i(f)$ to be the numbers
\begin{eqnarray*}
\chi _i(f):=\limsup _{n\rightarrow\infty} \|(f^n)^*|_{H^{i}(X,\mathbb{Q}_l)}\|^{1/n},
\end{eqnarray*}
here we fix any norm on the finite-dimensional vector space $H^{i}(X,\mathbb{Q}_l)$. We may call the number
\begin{eqnarray*}
\log \chi (f):=\log \max _{i=0,\ldots ,2\dim (X)}\chi _i(f),
\end{eqnarray*}
the algebraic entropy. 

Note that we always have $\chi _{2i}(f)\geq \lambda _i(f)$, but the finiteness of the above numbers $\chi _i(f)$ is not obvious. We expect that known results for relative dynamical degrees on $\mathbb{K}=\mathbb{C}$ should be carried out to an arbitrary field. 

 \section{Proofs of Theorems \ref{Theorem1}, \ref{Theorem6} and \ref{Theorem2}} 
\label{Section2}

{\bf Convention.} Strictly speaking, for the arguments below to be extremely rigorous, we need to use the Tate twists of the l-adic cohomology groups in various places. For example, a subvariety of codimension $c$ of $X$ has cohomology class in $H^{2c}(X,\mathbb{Q}_l(c))$. Similarly, we also need to use a twist in the Poincar\'e duality. However, since $H^{2c}(X,\mathbb{Q}_l(c))=H^{2c}(X,\mathbb{Q}_l)\otimes \mathbb{Q}_l(c)$, and $\mathbb{Q}_l(c)$ is a $1$-dimensional $\mathbb{Q}_l$ vector space, the computations and estimates on $H^{2c}(X,\mathbb{Q}_l(c))$ and $H^{2c}(X,\mathbb{Q}_l)$ are almost identical. For simplicity, the symbols for the twists are suppressed. (See also Remark 25.5 in \cite{milne}.)

Let $Z$ be a smooth projective variety of {\bf even} dimension $2k'$. Assume that $NH(Z)$ holds. We then construct a useful decomposition on $H^{2k'}(Z,\mathbb{Q}_l)$. By Poincar\'e duality, the intersection product
\begin{eqnarray*}
(,):~H^{2k'}(Z,\mathbb{Q}_l)\times H^{2k'}(Z,\mathbb{Q}_l) \rightarrow \mathbb{Q}_l,
\end{eqnarray*}
is symmetric and non-degenerate. Under the assumption that $NH(Z)$ holds, we will prove that there is a decomposition: 
\begin{eqnarray*}
H^{2k'}(Z,\mathbb{Q}_l)=H^{2k'}_{alg}(Z,\mathbb{Q}_l)\oplus H^{2k'}_{tr}(Z,\mathbb{Q}_l).
\end{eqnarray*}
Here $H^{2k'}_{alg}(Z,\mathbb{Q}_l)$ (the algebraic part) is the subvector space generated by the images of algebraic cycles in $H^{2k'}(Z,\mathbb{Q}_l)$; and $H^{2k'}_{tr}(Z,\mathbb{Q}_l)$ (the transcendental part) is the orthogonal complement of $H^{2k'}_{alg}(Z,\mathbb{Q}_l)$ under the intersection product. In other words:
\begin{eqnarray*}
H^{2k'}_{tr}(Z,\mathbb{Q}_l):=\{\alpha \in H^{2k'}(Z,\mathbb{Q}_l):~\alpha .\beta =0, ~\forall \beta \in H^{2k'}_{alg}(Z,\mathbb{Q}_l)\}. 
\end{eqnarray*} 
This decomposition is based on the following lemma.
\begin{lemma}
Assume that condition NH(Z) holds. Then the intersection product $H^{2k'}_{alg}(Z,\mathbb{Q}_l)\times H^{2k'}_{alg}(Z,\mathbb{Q}_l)\rightarrow \mathbb{Q}_l$ is non-degenerate and symmetric. 
\label{Lemma1}\end{lemma}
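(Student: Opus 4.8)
\emph{Proof proposal.} The plan is to deduce the lemma directly from the definition of numerical equivalence, using $NH(Z)$ precisely as the device that replaces homological by numerical equivalence on the algebraic part of the middle cohomology. Symmetry is immediate and carries no content: the cup-product pairing $H^{2k'}(Z,\mathbb{Q}_l)\times H^{2k'}(Z,\mathbb{Q}_l)\to H^{4k'}(Z,\mathbb{Q}_l)\cong\mathbb{Q}_l$ satisfies $\alpha\cup\beta=(-1)^{(2k')^2}\beta\cup\alpha=\beta\cup\alpha$ since $2k'$ is even, and the restriction of a symmetric form to a subspace stays symmetric. So the entire point is the non-degeneracy of the restriction to $H^{2k'}_{alg}(Z,\mathbb{Q}_l)$.

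First I would identify $H^{2k'}_{alg}(Z,\mathbb{Q}_l)$ with $N^{k'}(Z)\otimes_{\mathbb{Z}}\mathbb{Q}_l$ compatibly with the pairings. The cycle class map $\mathrm{cl}\colon\mathcal{Z}^{k'}(Z)\to H^{2k'}(Z,\mathbb{Q}_l)$ has kernel $\mathcal{Z}^{k'}_{hom}(Z)$ by definition, hence induces an injection $(\mathcal{Z}^{k'}(Z)/\mathcal{Z}^{k'}_{hom}(Z))\otimes\mathbb{Q}\hookrightarrow H^{2k'}(Z,\mathbb{Q}_l)$ (flatness of $\mathbb{Q}$ over $\mathbb{Z}$, and the target is already a $\mathbb{Q}$-vector space). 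Now $NH(Z)$ gives $\mathcal{Z}^{k'}_{hom}(Z)\otimes\mathbb{Q}=\mathcal{Z}^{k'}_{num}(Z)\otimes\mathbb{Q}$, so the source of that injection is exactly $N^{k'}(Z)\otimes\mathbb{Q}$; tensoring up by $\mathbb{Q}_l$ (flat again, so injectivity is preserved) produces an injective $\mathbb{Q}_l$-linear map $\iota\colon N^{k'}(Z)\otimes_{\mathbb{Z}}\mathbb{Q}_l\hookrightarrow H^{2k'}(Z,\mathbb{Q}_l)$ whose image is, by construction, the $\mathbb{Q}_l$-span of $\mathrm{cl}(\mathcal{Z}^{k'}(Z))$, i.e. $H^{2k'}_{alg}(Z,\mathbb{Q}_l)$. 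By the compatibility of the cycle class map with intersection products (a Weil-cohomology axiom, applied here in complementary codimensions $k'$ and $k'$ on $Z$, of dimension $2k'$), $\iota$ intertwines the cup-product pairing on $H^{2k'}_{alg}(Z,\mathbb{Q}_l)$ with the $\mathbb{Q}_l$-linear extension of the $\mathbb{Z}$-valued intersection pairing on $N^{k'}(Z)$.

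It then remains to transport non-degeneracy. On $Z$ of dimension $2k'$, codimension-$k'$ cycles pair with dimension-$k'$ (i.e. codimension-$k'$) cycles, so the intersection pairing $N^{k'}(Z)\times N^{k'}(Z)\to\mathbb{Z}$ is non-degenerate by the very definition of $N^{k'}(Z)$: one has quotiented out exactly its left/right radical. Picking a $\mathbb{Z}$-basis of the free group $N^{k'}(Z)$, the Gram matrix $G$ has $\det G\neq 0$ in $\mathbb{Z}$, hence $\det G\neq 0$ in $\mathbb{Q}_l$, so the extended pairing on $N^{k'}(Z)\otimes_{\mathbb{Z}}\mathbb{Q}_l$ is non-degenerate; via the isometry $\iota$ this is the non-degeneracy of the cup product on $H^{2k'}_{alg}(Z,\mathbb{Q}_l)$, and the lemma follows. (Concretely: if $\alpha\in H^{2k'}_{alg}(Z,\mathbb{Q}_l)$ is orthogonal to every $\mathrm{cl}(W)$ with $W\in\mathcal{Z}^{k'}(Z)$, write $\alpha=\iota(x)$ with $x=\sum_m x_m e_m$; pairing against the basis classes forces $Gx=0$, whence $x=0$ and $\alpha=0$.) The decomposition $H^{2k'}(Z,\mathbb{Q}_l)=H^{2k'}_{alg}\oplus H^{2k'}_{tr}$ quoted before the lemma is then the standard consequence, since a non-degenerate restriction of a non-degenerate symmetric form gives a direct-sum splitting against the orthogonal complement.

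I expect no conceptual obstacle here; the only mildly delicate points are bookkeeping ones. One is the compatibility of the cycle class map with the intersection pairing when the two cycles are not in general position, which is handled either by Fulton's refined intersection product or by Chow's moving lemma on the smooth projective variety $Z$; the other is keeping track of the suppressed Tate twists (the pairing really lands in $\mathbb{Q}_l(-2k')$, a one-dimensional space, so nothing changes). The substantive observation is simply that $NH(Z)$ is exactly the hypothesis that converts the intersection form on algebraic cohomology into the tautologically non-degenerate numerical form.
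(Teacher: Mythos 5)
Your proof is correct, and the route is genuinely different from (and somewhat cleaner than) the one in the paper. The paper establishes non-degeneracy by an induction on the number of cycles, building a dual system $y_1,\dots,y_m$ with $x_i.y_j=\delta_{ij}$ one step at a time; a portion of that induction is devoted to showing that a priori $\mathbb{Q}_l$-valued coefficients $a_i$ are in fact rational, so that $NH(Z)$ can be applied to the cycle $x_{m+1}-\sum a_i x_i$. You avoid this rationality bookkeeping entirely by first passing to $N^{k'}(Z)$, where the intersection form is a non-degenerate $\mathbb{Z}$-valued symmetric pairing; the invertibility of the integer Gram matrix $G$ survives scalar extension to any characteristic-zero field, so nothing has to be tracked. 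The hypothesis $NH(Z)$ enters exactly once and exactly where it must: it makes the cycle class map factor through $N^{k'}(Z)\otimes\mathbb{Q}$, so that for a $\mathbb{Z}$-basis $(e_i)$ of $N^{k'}(Z)$ the classes $\mathrm{cl}(e_i)$ span $H^{2k'}_{alg}(Z,\mathbb{Q}_l)$; this is what lets you write any $\alpha$ in the radical as $\alpha=\sum_i x_i\,\mathrm{cl}(e_i)$ with $x_i\in\mathbb{Q}_l$ and compute $0=\alpha.\mathrm{cl}(e_j)=(Gx)_j$, forcing $x=0$.

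One small inaccuracy in the commentary, not in the substance: flatness of $\mathbb{Q}_l$ over $\mathbb{Q}$ gives an injection $N^{k'}(Z)\otimes_{\mathbb{Q}}\mathbb{Q}_l\hookrightarrow H^{2k'}(Z,\mathbb{Q}_l)\otimes_{\mathbb{Q}}\mathbb{Q}_l$, not into $H^{2k'}(Z,\mathbb{Q}_l)$ itself. Since $H^{2k'}(Z,\mathbb{Q}_l)$ is already a $\mathbb{Q}_l$-vector space, $H^{2k'}(Z,\mathbb{Q}_l)\otimes_{\mathbb{Q}}\mathbb{Q}_l$ is much larger, and the multiplication map from it to $H^{2k'}(Z,\mathbb{Q}_l)$ has a large kernel; $\mathbb{Q}$-linear independence in a $\mathbb{Q}_l$-space does not in general promote to $\mathbb{Q}_l$-linear independence. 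So flatness alone does not justify the injectivity of $\iota$. However, your own concrete Gram-matrix computation at the end is precisely the correct justification: if $\iota(x)=0$ then $Gx=0$, hence $x=0$. Just derive the injectivity of $\iota$ from $\det G\neq 0$ rather than from flatness, and the argument is airtight.
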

\begin{proof}
The symmetricity is clear because here $2k'$ is an even number.  Thus, only the non-degeneracy needs to be proven. 

Let $m=$ the dimension of $H^{2k'}(Z,\mathbb{Q}_l)$, and $x_1,\ldots ,x_m$ algebraic cycles (with coefficients in $\mathbb{Q}$) so that their image in $H^{2k'}(Z,\mathbb{Q}_l)$ generate $H^{2k'}_{alg}(Z,\mathbb{Q}_l)$. 
Let $\mathcal{Z}^{k'}_{\mathbb{Q}}(Z)$ be the group of algebraic cycles (with coefficients in $\mathbb{Q}$) modulo the numerical equivalence. By the next claim, we can find algebraic cycles $y_1,\ldots ,y_m$ (with coefficients in $\mathbb{Q}_l$) which satisfy the condition $x_i.y_j=\delta _{i,j}$ for all $i,j=1,\ldots ,m$. In particular, the images of $y_1,\ldots ,y_m$ in $H^{2k'}(Z,\mathbb{Q}_l)$ are linearly independent. In particular, they also form a basis for $H^{2k'}_{alg}(Z,\mathbb{Q}_l)$. 

{\bf Claim.} Assume $NH(Z)$ holds. Let $x_1,\ldots ,x_{m}$ be algebraic cycles (with rational coefficients) of codimension $k'$ on $Z$ so that their images in $H^{2k'}(Z,\mathbb{Q}_l)$ are linearly independent. Then, there are algebraic cycles $y_1,\ldots , y_m$ so that $x_i.y_j=\delta _{i,j}$ for all $i,j=1,\ldots ,m$. 

Proof of the Claim: 

We prove by induction on $m$. 

When $m=1$, since the image of $x_1$ in $H^{2k'}(Z,\mathbb{Q}_l)$ is not $0$, it follows from the $NH(Z)$ condition that the image of $x_1$ in $\mathcal{Z}^{k'}_{\mathbb{Q}}(Z)$ is not $0$. Therefore, by the definition of $\mathcal{Z}^{k'}_{\mathbb{Q}}(Z)$, there is an algebraic cycle $y_1$ (with rational coefficients) so that $x_1.y_1=1$. Hence the case $m=1$ is proven.   

We assume that the Claim holds for $m$, and will show that it also holds for $m+1$. Now, let $x_1,\ldots ,x_{m+1}$ be algebraic cycles (with rational coefficients) so that their images in $H^{2k'}(Z,\mathbb{Q}_l)$ are linearly independent. To prove the claim, it suffices to prove the following: There is an algebraic cycle (with rational coefficients) $y_{m+1}$ so that $x_{m+1}.y_{m+1}\not= 0$, but $x_1.y_{m+1}=\ldots =x_m.y_{m+1}=0$. Assume otherwise, then it follows that there must be constants $a_1,\ldots ,a_{m}\in \mathbb{Q}_l$ so that we have
\begin{equation}
x_{m+1}.y=a_1(x_1.y)+\ldots +a_m(x_m.y),\label{Equation6}
\end{equation} 
for all algebraic cycles $y$. The inductional assumption, applied to the $m$ elements $x_1,\ldots ,x_m$, implies the existence of an algebraic cycle $y_0$ with rational coefficients so that $x_1.y_0=1$ but $x_2.y_0=\ldots =x_m.y_0=0$. Choosing $y=y_0$ in (\ref{Equation6}), we obtain that $x_{m+1}.y_0=a_1$, hence $a_1\in \mathbb{Q}$. Then equation (\ref{Equation6}) becomes:
\begin{eqnarray*}
x_1'.y=a_2(x_2.y)+\ldots +a_m(x_m.y),
\end{eqnarray*}
for all algebraic cycles $y$, where $x_1'=x_{m+1}-a_1x_1$, $x_2,\ldots ,x_m$ are algebraic cycles with rational coefficients whose images in $H^{2k'}(Z,\mathbb{Q}_l)$ are linearly independent. The inductional assumption then implies that this cannot be the case. Thus, the case $m+1$ is proven. Hence, the claim is now proven (Q.E.D.). 

We continue the proof of the lemma. 

Now to show that the intersection product $H^{2k'}_{alg}(Z,\mathbb{Q}_l)\times H^{2k'}_{alg}(Z,\mathbb{Q}_l)\rightarrow \mathbb{Q}_l$ is non-degenerate, because the vector spaces involved are finite dimensional, it suffices to show that for any $0\not= \alpha \in H^{2k'}_{alg}(X,\mathbb{Q}_l)$, there is a $\beta \in H^{2k'}_{alg}(X,\mathbb{Q}_l)$ so that $\alpha .\beta \not= 0$. Since $x_1,\ldots ,x_m$ is a basis for $H^{2k'}_{alg}(X,\mathbb{Q}_l)$, there are $a_1,\ldots ,a_m\in \mathbb{Q}_l$ so that: 
\begin{eqnarray*}
\alpha =\sum _{i=1}^ma_ix_i,
\end{eqnarray*}
and because $\alpha \not= 0$, at least one of them, say $a_1$, is non-zero. Then 
\begin{eqnarray*}
\alpha .y_1=a_1\not= 0,
\end{eqnarray*}
hence the choice $\beta =y_1$ satisfies the requirement. 
\end{proof}

We let $\alpha _1,\ldots , \alpha _m$ be an orthogonal basis for $H^{2k'}_{alg}(X,\mathbb{Q}_l)$, with respect to the cup product (which always exist, since the characteristic of $\mathbb{Q}_l$ is $0$, and the cup product is symmetric). The non-degeneracy of cup product (Lemma \ref{Lemma1}) implies that $\alpha _i.\alpha _i\not= 0$ for all $i$.  If $x\in H^{2k'}(Z,\mathbb{Q}_l)$, we define
\begin{eqnarray*}
x'&=&\sum _{i=1}^m\frac{x.\alpha _i}{\alpha _i.\alpha _i}\alpha _i,\\
x"&=&x-x'.
\end{eqnarray*}
Then it is easy to check that $x'\in H^{2k'}_{alg}(Z,\mathbb{Q}_l)$, $x"\in H^{2k'}_{tr}(Z,\mathbb{Q}_l)$ and $x=x'+x"$. Moreover, this decomposition of $x$ is unique. Hence, we have the desired decomposition. We denote by $\tau :H^{2k'}(Z,\mathbb{Q}_l)\rightarrow H^{2k'}_{alg}(Z,\mathbb{Q}_l)$ the projection to the algebraic part. 

We also present another preliminary result before the proofs of the main results. Assume that $f:X\rightarrow X$ be a correspondence. Let $\alpha _1,\ldots ,\alpha _m$ be a basis for $H^{i}(X,\mathbb{Q}_l)$ and $\beta _1,\ldots ,\beta _m$ be a basis for $H^{2\dim (X)-i}(X,\mathbb{Q}_l)$. Fix arbitrary norms on $H^i(X,\mathbb{Q}_l)$ and $H^{2\dim (X)-i}(X,\mathbb{Q}_l)$, and an embedding of $\mathbb{Q}_l$ into $\mathbb{C}$. We let $|.|$ be the induced absolute value on $\mathbb{Q}_l$. Then there are positive constants $C_1,C_2>0$, independent of $f$, such that
\begin{equation}
C_1\sum _{p,q=1,\ldots ,m}|f^*(\alpha _p).\beta _q|\leq \|f^*|_{H^{i}(X,\mathbb{Q}_l)}\|\leq C_2\sum _{p,q=1,\ldots ,m}|f^*(\alpha _p).\beta _q|.
\label{Equation3}\end{equation} 
In fact, since the intersection product $$H^{i}(X,\mathbb{Q}_l)\times H^{2\dim (X)-i}(X,\mathbb{Q}_l)\rightarrow \mathbb{Q}_l$$ is non-degenerate, the following is a norm on $H^{i}(X,\mathbb{Q}_l)$: if $\alpha \in H^{i}(X,\mathbb{Q}_l)$ then
\begin{eqnarray*}
\|\alpha \|:=\sum _{q=1}^m|\alpha .\beta _q|.
\end{eqnarray*}
By definition, we then have
\begin{eqnarray*}
\|f^*|_{H^{i}(X,\mathbb{Q}_l)}\|:=\sup _{\|\alpha \|=1}\|f^*(\alpha )\|=\sup _{\|\alpha \|=1}\sum _{q=1}^m  |f^*(\alpha ).\beta _q|. 
\end{eqnarray*}
The left hand side inequality of (\ref{Equation3}) is obvious if we choose 
\begin{eqnarray*}
\frac{1}{C_1}=\sum _{p=1}^m\|\alpha _p\|.
\end{eqnarray*}

The right hand side inequality of (\ref{Equation3}) follows provided that if $\alpha =\sum _{p=1,\ldots ,m}x_p\alpha _p$ and $\|\alpha \|\leq 1$, then $\max _{p=1,\ldots m}|x_p|\leq C$ for some positive constant $C$. The latter claim is a simple consequence of the fact that on a finite dimensional vector space, any two norms are equivalent. We then apply this fact to two norms. The first is the norm $\|.\|$ chosen above. The second is the one $$\|\alpha \|':=\sum _{p=1}^m|x_p|.$$

Now we are ready for the proofs of the results. 

\begin{proof}[Proof of Theorem \ref{Theorem1}]  
1) Fix an integer $i$ between $0,\ldots ,2\dim (X)$. It is sufficient to prove that given $\lambda >\max _{i=0,\ldots ,\dim (X)}\lambda _i(f)$, then
\begin{eqnarray*}
\lim _{n\rightarrow\infty}\|(f^n)^*|_{H^i(X,\mathbb{Q}_l)}\|/\lambda ^n=0.
\end{eqnarray*} 

Let $\alpha _1,\ldots ,\alpha _m$ be a basis for $H^{i}(X,\mathbb{Q}_l)$ and $\beta _1,\ldots ,\beta _m$ be a basis for $H^{i}(X,\mathbb{Q}_l)$. Then by the observation at the beginning of this section, we have 
\begin{eqnarray*}
\|(f^n)^*|_{H^{i}(X,\mathbb{Q}_l)}\|\leq C\sum _{p,q=1,\ldots ,m}|(f^n)^*(\alpha _p).\beta _q|,
\end{eqnarray*}  
where $C$ is independent of $n$.

Hence, it is enough to show that for any $p,q=1,\ldots ,m$
\begin{eqnarray*}
\lim _{n\rightarrow\infty}|(f^n)^*(\alpha _p).\beta _q|/\lambda ^n=0.
\end{eqnarray*}

Let $pr_1,pr_2:X\times X\rightarrow X\times X$ be the two projections. Then, under the assumption that $NH(X\times X)$ holds, we have 
\begin{eqnarray*}
|(f^n)^*(\alpha _p).\beta _q|=|\Gamma _{f^n}.pr _2^*(\alpha _p).pr _1^*(\beta _q)|= |\Gamma _{f^n}.\tau (pr _2^*(\alpha _p).pr _1^*(\beta _q))|. 
\end{eqnarray*}

Since $\tau (pr _2^*(\alpha _p).pr _1^*(\beta _q))$ is represented by an algebraic cycle of dimension $k$, it then follows from the results in \cite{truong9} that we have the desired result
\begin{eqnarray*}
\lim _{n\rightarrow\infty}|(f^n)^*(\alpha _p).\beta _q|/\lambda ^n=0.
\end{eqnarray*}
More precisely, the results we used here are the following. First (Lemma 2.2 in \cite{truong9}), for any effective algebraic cycle $V$ of codimension $k$ on $X\times X$,   
$$|\Gamma _{f^n}.V|\leq C \deg (\Gamma _{f^n})\deg (V),$$
where $\deg (.)$ is the degree of an algebraic cycle in a fixed embedding of $X\times X$ into a projective space, and $C>0$ is a positive constant independent of $n$, $f$ and $V$.

Second (see (\ref{Equation1})),
\begin{eqnarray*}
\lim _{n\rightarrow\infty} \deg (\Gamma _{f^n})^{1/n}=\max _{i=0,\ldots ,\dim (X)}\lambda _i(f). 
\end{eqnarray*} 

2) The proof is similar, but here we need to use the Weil's Riemann hypothesis. Let $\lambda >\max _{i=0,\ldots ,\dim (X)}\lambda _i(f)$.  Since $f$ is a regular morphism, we obtain $\chi _{i}(f)=sp (f^*|_{H^i(X,\mathbb{Q}_l)})$, where $sp (f^*|_{H^i(X,\mathbb{Q}_l)})$ is the largest absolute value of the eigenvalues of $f^*:H^i(X,\mathbb{Q}_l)\rightarrow H^i(X,\mathbb{Q}_l)$. (Here again the absolute value is induced from the given embedding of $\mathbb{Q}_l$ into $\mathbb{C}$.) 

It suffices to consider the case where $X$ has positive characteristic $p$. We may assume that $X$ and $f$ are defined on some finite field $\mathbb{F}_q$. We recall briefly this well-known argument. There is (by collecting the coefficients in the defining equations for $X$ and $f$) a large subring $R$ of $\mathbb{K}$, finitely generated over $\mathbb{F}_p$, so that $X$ is a smooth fibre of a scheme $\mathcal{X}$ on $Spec(R)$ and $f$ is the generic fibre of a morphism $F:\mathcal{X}\rightarrow \mathcal{X}$ over $Spec(R)$. Let $X_0$ be a special fibre defined over a finite field $\mathbb{F}_q$, and $f_0=F|_{X_0}$. Define $\widetilde{X_0}$ and $\widetilde{f_0}$ to be the lifts of $X_0$ and $f_0$ to the algebraic closure $\overline{\mathbb{F}_p}$ of $\mathbb{F}_q$. The smooth base change theorem (Chapter 25 in \cite{milne}) then implies that
\begin{eqnarray*}
Tr[(f^n)^*:H^i(X,\mathbb{Q}_l)\rightarrow H^{i}(X,\mathbb{Q}_l)]=Tr[(\widetilde{f_0}^n)^*:H^i(\widetilde{X_0},\mathbb{Q}_l)\rightarrow H^i(\widetilde{X_0},\mathbb{Q}_l)],
\end{eqnarray*}
 for all $n$. While $\widetilde{X_0}$ may have more algebraic cycles than $X$, (\ref{Equation1}) implies that geometric dynamical degrees are lower-semicontinuous, and hence $\lambda _i(f)\geq \lambda _i(\widetilde{f_0})$ for all $i$. Therefore, if we can prove the conclusion for $\widetilde{X_0}$, then the conclusion for $X$ follows. Thus from now on we assume that $X$ is defined on a finite field.  

Let $\widetilde{Fr}:X\times X\rightarrow X\times X$ be the map $(x,y)\mapsto (x,Fr(y))$, where $Fr:X\rightarrow X$ is the Frobenius map. As a consequence of Deligne's proof of Weil's Riemann hypothesis, there is a polynomial $p_i(\widetilde{Fr})$  so that we have the generalised Lefschetz Trace Formula:
\begin{equation}
\Gamma _{f^n}.[p_i(\widetilde{Fr})^*\Delta ]=(-1)^iTr[(f^n)^*:H^i(X,\mathbb{Q}_l)\rightarrow H^i(X,\mathbb{Q}_l)].
\label{Equation4}\end{equation}

To see this, we use that each projection $p_i:~H^*(X,\mathbb{Q}_l)\rightarrow H^i(X,\mathbb{Q}_l)$ can be represented as a polynomial $p_i(Fr^*)$ in the pullback $Fr^*$ of the Frobenius map (part 1 of Theorem 2 in \cite{katz-messing}). [For the convenience of the readers, we reproduce their arguments here. In fact, the sets of eigenvalues of $Fr^*$ on two distinct cohomology groups $H^i(X,\mathbb{Q}_l)$ and $H^j(X,\mathbb{Q}_l)$ are disjoint, by the Weil's Riemann hypothesis. Hence if $R(t)$ is the characteristic polynomial of $Fr^*:H^*(X,\mathbb{Q}_l)\rightarrow H^*(X,\mathbb{Q}_l)$, $R_i(t)$ is the characteristic polynomial of $Fr^*:H^i(X,\mathbb{Q}_l)\rightarrow H^i(X,\mathbb{Q}_l)$ and $B_i(t)=R(t)/R_i(t)$ then $B_i(t)$ has no common zero with $R_i(t)$. Therefore, $B_i(Fr^*)$ is $0$ on $\oplus _{j\not= i}H^j(X,\mathbb{Q}_l)$ (by the Cayley - Hamilton theorem), and is invertible on $H^i(X,\mathbb{Q}_l)$. Then, by Cayley - Hamilton theorem again,  there is a polynomial $h(t)$ such that $h(0)=0$ and $h(B_i(Fr^*))|_{H^i(X,\mathbb{Q}_l)}=Id$. Since $B_i(Fr^*)$ is $0$ when restricted to $H^j(X,\mathbb{Q}_l)$ ($j\not= i$) and $h(0)=0$, it follows that $p_i(Fr^*):=h(B_i(Fr^*))$ is the projection $p_i:H^*(X,\mathbb{Q}_l)\rightarrow H^i(X,\mathbb{Q}_l)$.]

Note that $p_i(Fr^*)=p_i(Fr)^*$. Then, the Lefschetz Trace Formula (see e.g. Theorem 2.1 in \cite{milne1}) can be applied to the {\bf cohomological} correspondence $ (f^n)^*\circ p_i(Fr)^*=(p_i(Fr)\circ f^n)^*$:
\begin{eqnarray*}
[p_i(\widetilde{Fr})_*\Gamma _{f^n})].\Delta =(\Gamma _{p_i(Fr)\circ f^n}).\Delta = (-1)^iTr[(f^n)^*:H^i(X,\mathbb{Q}_l)\rightarrow H^i(X,\mathbb{Q}_l)].
\end{eqnarray*}
Then (\ref{Equation4}) is obtained by observing that by the projection formula: 
$$[p_i(\widetilde{Fr})_*\Gamma _{f^n})].\Delta  =\Gamma _{f^n}.[p_i(\widetilde{Fr})^*\Delta ],$$

Since the class of $p_i(\widetilde{Fr})^*\Delta $ is an algebraic cycle (with rational coefficients), the proof is completed by observing that similarly to part 1)
\begin{eqnarray*}
\limsup _{n\rightarrow\infty}|\Gamma _{f^n}.[p_i(\widetilde{Fr})^*\Delta ]|/\lambda ^n=0,
\end{eqnarray*} 
and that (using $(f^n)^*=(f^*)^n$ for a regular morphism $f$)
\begin{eqnarray*}
\limsup _{n\rightarrow\infty}|Tr[(f^n)^*:H^i(X,\mathbb{Q}_l)\rightarrow H^i(X,\mathbb{Q}_l)]|^{1/n}=sp(f^*|_{H^{i}(X,\mathbb{Q}_l)}). 
\end{eqnarray*}
The last (elementary) equality can be deduced from the following simple claim, which we leave to the readers to verify. 

Claim: Let $\mu _1,\ldots ,\mu _m$ be complex numbers with $|\mu _1|=\ldots =|\mu _m|=1$. For any $\epsilon >0$, there exist infinitely many values of positive integers $k$ such that $|\mu _i^k-1|<\epsilon $, and in particular $Re(\mu _j^k)>1-\epsilon $ for all $j$.  

\end{proof}
\begin{remarks}
In the original proof of part 2) above, we used the usual Lefschetz Trace Formula. Then similarly we can bound the alternative sum 
$$\sum _{i=0}^{2\dim (X)}(-1)^iTr[(f^n)^*:H^i(X,\mathbb{Q}_l)\rightarrow H^i(X,\mathbb{Q}_l)],$$
in terms of the geometric dynamical degrees $\lambda _i(f)$. However, there may be some cancelations in the alternative sum of the traces which do not quite give us the inequality we need. We thank Peter O'Sullivan for pointing this out and for suggesting the correction which we used here. 
\label{Remark2}\end{remarks}

There is a subtlety when applying the argument of reduction to finite fields in the proof of part 2) of Theorem \ref{Theorem1} to iterates of correspondences (for example, the finite fields may increase when we increase the iterates). If $X$ is already defined on a finite field, then such a reduction is not needed. There is still a difficulty arising from the fact that in general we do not have $(f^n)^*=(f^*)^n$, and hence the eigenvalues of $(f^n)^*$ may not be related to those of $f^*$. However, this can be dealt with by a modification of the proof, and this gives us a proof of Theorem \ref{Theorem6}. Below we provide a detailed argument. 
\begin{proof}[Proof of Theorem \ref{Theorem6}]
Since $\mathbb{K}=\overline{\mathbb{F}_p}$ is the closure of a finite field, $X$ is actually defined on a finite field $\mathbb{F}_q$, where $q$ is a power of $p$. Then we have (see the proof of part 2) of Theorem \ref{Theorem1}) that the projections $p_i:H^*(X,\mathbb{Q}_l)\rightarrow H^i(X,\mathbb{Q}_l)$ are all algebraic, that is $p_i=p_i(Fr^*)$ for some polynomials in the pullback $Fr^*$ of the Frobenius $Fr:X\rightarrow X$. We let $\widetilde{Fr}:X\times X\rightarrow X\times X$ be the map $(x,y)\mapsto (x,F(y))$. Then, by the proof of the Lefschetz trace formula (see e.g. Theorem 2.1 in \cite{milne1}), for any {\bf generalised} correspondence (with integer coefficients, rather than only positive coefficients) $\phi :X\rightarrow X$, we have
\begin{eqnarray*}
\Gamma _{\phi}. p_i(\widetilde{Fr})^*(\Delta )=(-1)^iTr[\phi :H^{i}(X,\mathbb{Q}_l)\rightarrow H^{i}(X,\mathbb{Q}_l)]. 
\end{eqnarray*}  

We will use this to prove the following claim:

Claim. Let $f:X\rightarrow X$ be a dominant correspondence. Then, for all $i=0,\ldots ,2\dim (X)$
\begin{eqnarray*}
sp (f^*|_{H^{i}(X,\mathbb{Q}_l)})\leq C \deg (\Gamma _f),
\end{eqnarray*}
 where $C>0$ is independent of $f$. Here, as in the case of regular morphisms, $sp (f^*|_{H^i(X,\mathbb{Q}_l)})$ is the spectral radius of the pullback $f^*:H^{i}(X,\mathbb{Q}_l)\rightarrow H^{i}(X,\mathbb{Q}_l)$. 
 
 Proof of Claim. For any $n$, we consider the {\bf cohomological} correspondence $\phi _n:=(f^*)^n:H^*(X,\mathbb{Q}_l)\rightarrow H^*(X,\mathbb{Q}_l)$. Since $\phi _1=f^*$ is algebraic, it follows that all $\phi _n$ are algebraic. That is, we can write $\phi _n=(f_n^+)^*-(f_n^-)^{*}$, where $f_n^{\pm}$ are effective algebraic cycles on $X\times X$. Moreover, an iterated use of Lemma 2.2 in \cite{truong9} shows that we can arrange to have the estimates
 \begin{eqnarray*}
 \deg (f_n^{\pm})\leq (2C)^n\deg (\Gamma _f)^n,
 \end{eqnarray*}
 for all $n$. Here $C>0$ is the constant in Lemma 2.2 in \cite{truong9}. It follows again from this Lemma that 
 \begin{eqnarray*}
 |Tr[(f^*)^n:H^{i}(X,\mathbb{Q}_l)\rightarrow H^i(X,\mathbb{Q}_l)]|&=&|Tr[(f^*)^n:H^{i}(X,\mathbb{Q}_l)\rightarrow H^i(X,\mathbb{Q}_l)]|\\
 &=&|(f_n^+-f_n^-).p_i(\widetilde{Fr})^*(\Delta )|\\
 &\leq&C(\deg (f_n^+)+\deg (f_n^-))\\
 &\leq&C(2C)^n\deg (\Gamma _f)^n.  
 \end{eqnarray*}
 Therefore, 
 \begin{eqnarray*}
 2C\deg (\Gamma _f)\geq \limsup _{n\rightarrow\infty} |Tr[(f^*)^n:H^{i}(X,\mathbb{Q}_l)\rightarrow H^i(X,\mathbb{Q}_l)]|^{1/n}=sp(f^*|_{H^{i}(X,\mathbb{Q}_l)}). 
 \end{eqnarray*}
 Here the constant $2C>0$ is independent of $f$. (Q.E.D.)
 
 Now we continue the proof of the theorem. Applying the Claim to iterates $f^n$ and using (\ref{Equation1}), we have
 \begin{eqnarray*}
\chi _i(f)&=&\limsup _{n\rightarrow\infty} sp((f^n)^*|_{H^{i}(X,\mathbb{Q}_l)})^{1/n}\\
&\leq& \limsup _{n\rightarrow\infty}[C\deg (\Gamma _{f^n})]^{1/n}\\
&=&\max \{\lambda _0(f),\ldots ,\lambda _p(f)\}. 
 \end{eqnarray*}

\end{proof}

\begin{proof}[Proof of Theorem \ref{Theorem2}]
1) Since we assume that $NH(X\times X)$ holds,  part 1) of Theorem \ref{Theorem1} applies. Hence, we have $\max \{\chi _1(f),\chi _2(f),\chi _3(f)\}\leq \max \{\lambda _0(f),\lambda _1(f),\lambda _2(f)\}$. The right hand side in the above inequality is $\lambda _1(f)$ by the other assumption in the theorem. From the obvious inequality $\chi _2(f)\geq \lambda _1(f)$,  we obtain the conclusion of the theorem. 

1') Since we assume that $\mathbb{K}=\overline{\mathbb{F}_q}$, Theorem \ref{Theorem6} applies. Then we argue similarly to part 1). 

2) Since we assume that $f$ is a regular morphism, part 2) of Theorem \ref{Theorem1} applies. Then we argue similarly to part 1). 

\end{proof}

\section{Dynamical degrees and the Weil's Riemann hypothesis}
\label{Section3}
{\bf Convention.} As in Section \ref{Section2}, for simplicity we suppress all the Tate twists in the l-adic cohomology groups.

For the convenience of the readers, we first recall some backgrounds about the Weil's Riemann hypothesis. Let $X_0$ be a smooth projective variety defined over a finite field $\mathbb{F}_q$. Let $X$ be the lift of $X_0$ to an algebraic closure $\overline{\mathbb{F}_q}$. Let $Fr_X:X\rightarrow X$ be the Frobenius morphism. A simple expression of it is as follows. On a projective space $\mathbb{P}^N$, $Fr[x_0:\ldots :x_N]=[x_0^q:\ldots :x_N^q]$. If $X\subset \mathbb{P}^N$, then $Fr_X$ is simply the restriction of $Fr$ to $X$. The Weil's Riemann hypothesis is then the following statement. It was solved by Piere Deligne in the $1970$'s.

{\bf Weil's Riemman hypothesis.} If $\alpha$ is an eigenvalue of $Fr^*:H^i(X,\mathbb{Q}_l)\rightarrow H^i(X,\mathbb{Q}_l)$, then $|\alpha |=q^{i/2}$.

Here are some preliminary reductions of Weil's Riemann hypothesis (Chapter 28 in \cite{milne}). It is enough to solve the conjecture for any finite extension of $\mathbb{F}_q$. Another reduction is that, in the statement of the conjecture, it is enough to  show that $|\alpha |\leq q^{i/2}$. In \cite{milne}, the second reduction was proven by showing that if $\alpha$ is an eigenvalue of $Fr^*$ on $H^{i}(X,\mathbb{Q}_l)$, then $q^{\dim (X)}/\alpha$ is an eigenvalue of $Fr^*$ on $H^{2\dim (X)-i}(X,\mathbb{Q}_l)$. Here is another simple proof of this reduction, using Poincar\'e duality only. 
\begin{proof}[Proof of the second reduction]
 Assume that $0\not= v\in H^{i}(X,\mathbb{Q}_l)$ is an eigenvector of $Fr^*$ with eigenvalue $\alpha$. Then $|\alpha |\leq q^{i/2}$ by the hypothesis in the reduction. We will show that $|\alpha |=q^{i/2}$. To this end, let $w\in H^{2\dim (X)-i}(X,\mathbb{Q}_l)$ be so that $v.w=1$ (such a $w$ always exists since the intersection product $H^i(X,\mathbb{Q}_l)\times H^{2\dim (X)-i}(X,\mathbb{Q}_l)\rightarrow \mathbb{Q}_l$ is non-degenerate). Since the topological degree of $Fr$ is $q^{\dim (X)}$ (see Lemma 27.1 in \cite{milne}, or for another proof using the product formula for dynamical degrees $\lambda _i$'s see the proof of Claim 2 below), we deduce that 
 \begin{eqnarray*}
 q^{n\dim (X)}=(Fr^n)^*(v.w)=(Fr^n)^*(v).(Fr^n)^*(w)=\alpha ^{ni/2}v.(Fr^n)^*(w). 
 \end{eqnarray*}
  Since by the hypothesis of the reduction, the absolute values of the eigenvalues of $Fr^*$ on $H^{2\dim (X)-i}(X,\mathbb{Q}_l)$ are all $\leq q^{\dim (X)-i/2}$, the growth of $(Fr^n)^*(w)$ is bounded by $n^mq^{n\dim (X)-ni/2}$, for some constant $m$. It follows that $|\alpha |=q^{i/2}$ as wanted.   
\end{proof}
In terms of the cohomological dynamical degrees, the second reduction is equivalent to the statement that $\chi _i(Fr)\leq q^{i/2}$ for all $i=0,\ldots ,2\dim (X)$. By another elementary reduction (using product of spaces and maps), we obtain the following

{\bf Reduction.} Weil's Riemann hypothesis is equivalent to the statement that $\chi _{2i}(Fr)\leq q^{i}$ for all $i=0,\ldots ,\dim (X)$.  
 
 The proof of Theorem \ref{Theorem4} follows from the following claims. 

{\bf Claim 1.} Assume that we have the expected equality $\lambda _i(f)=\chi _{2i}(f)$ holds, for all smooth projective varieties $X$ and regular morphisms $f$ on $X$, and for all $i=0,\ldots ,\dim (X)$. Then the Weil's Riemann hypothesis holds. 
\begin{proof}
Applying the expected equality to iterates of the Frobenius map $Fr$, we find that $\chi _{2i}(Fr)=\lambda _i(Fr)$. The $\lambda _i(Fr)$ is easy to compute, and in this case is $q^{i}$. (For a fantasy proof of this fact, we can consider a dominant regular morphism $\pi :X\rightarrow \mathbb{P}^k$, and apply the product formula in \cite{truong9, truong}. See the proof of Claim 2 below for more details.) By definition of $\chi _{2i}(Fr)$, any eigenvalue of $Fr$ on $H^{2i}(X,\mathbb{Q}_l)$ has absolute value $\leq q^i$. By the preliminary reductions mentioned above, this is enough to prove the Weil's Riemann hypothesis.   
\end{proof}
 By Claim 1, the expected equality $\lambda _i(f)=\chi _{2i}(f)$ (which as noted before, holds in the case $\mathbb{K}=\mathbb{C}$) is a generalisation of the Weil's Riemann hypothesis.  

{\bf Claim 2.} Assume that the product formula holds for the cohomological dynamical degrees $\chi _{2i}$, and where $f$ and $g$ are both regular morphisms semi-conjugated by a dominant regular morphism with finite fibres $\pi :X\rightarrow \mathbb{P}^k$. Then the Weil's Riemann hypothesis holds. 
\begin{proof}
Let $\dim (X)=k$. There is always a dominant regular morphism $\pi :X\rightarrow Y=\mathbb{P}^k$ with finite fibres and which is defined on $\mathbb{F}_q$, for example by using Noether's normalisation theorem. The Frobenius maps have the important property that the equality $\pi \circ Fr_X=Fr_Y\circ \pi$ is always satisfied. Since $\dim (X)=k=\dim \mathbb{P}^k$, by the assumptions in Claim 2, we have
\begin{equation}
\chi _{2i}(Fr_X)=\chi _{2i}(Fr_Y)\chi _0(Fr_X|\pi )=q^i.  
\label{Equation2}\end{equation}
This is the conclusion of the Weil's Riemann hypothesis. Here we used that the Weil's Riemann hypothesis is true for $Y=\mathbb{P}^k$ (because the cohomology group of $\mathbb{P}^k$ is very simple and is generated by algebraic cycles) and $\chi _0(Fr_X|\pi )=1$ (following from $\chi _0(Fr_X)=\chi _0(Fr_Y)\chi _0(Fr_X|\pi )$ and $\chi _0(Fr_X)=\chi _0(Fr_Y)=1$). 
 \end{proof}
By Claim 2, the product formula for cohomological dynamical degrees  is also a generalisation of the Weil's Riemann hypothesis. 

{\bf Claim 3.} Assume that we have Dinh's inequality $\chi _i(f)^2\leq \max _{j+l=i}\lambda _j(f)\lambda _l(f)$ for all $i$ and regular morphisms in positive characteristic. Then the Weil's Riemann hypothesis holds. 
\begin{proof}
The proof is similar to those of the above claims, by observing that applying Dinh's inequality to the Frobenius map gives the desired inequality
\begin{eqnarray*}
\chi _i(Fr)^2\leq \max _{j+l=i}\lambda _j(Fr)\lambda _l(Fr)=\max _{j+l=i}q^jq^l=q^i.
\end{eqnarray*}
\end{proof}
By Claim 3, Dinh's inequality for cohomological dynamical degrees is yet another generalisation of the Weil's Riemann hypothesis.  

\section{An approach to Questions 2 and 3}
\label{Section4}

To Questions 2 and 3 in general, we propose to study the following two conditions. 
 
 {\bf Condition (A).} Let $X$ be a smooth projective variety over $\mathbb{K}$. Let $f_1,f_2:X\rightarrow X$ be two correspondences. Assume that $f_1\geq f_2$, that is there is an effective algebraic cycle $\Gamma  $ on $X\times X$ so that $\Gamma _{f_1}=\Gamma _{f_2}+\Gamma $. Then there is a positive constant $C>0$, independent of the correspondences $f_1$ and $f_2$, such that $C\|f_1^*|_{H^{2i}(X,\mathbb{Q}_l)}\|\geq \|f_2^*|_{H^{2i}(X,\mathbb{Q}_l)}\|$ for all $i=0,\ldots ,\dim (X)$. 
  
 {\bf Condition (B).} Let $X$ and $Y=\mathbb{P}^k$ be smooth projective varieties of the same dimension $k$. Let $\pi :X\rightarrow Y$ be a surjective regular morphism whose all fibres are {\bf finite}. Let $g:Y\rightarrow Y$ be a correspondence, and let $f:X\rightarrow X$ be the correspondence whose graph is $\Gamma _f:=(\pi \times \pi )^*(\Gamma _g)$. Then there exists a constant $C>0$, independent of $g$, so that
 \begin{eqnarray*}
 \|f^*|_{H^{2i}(X,\mathbb{Q}_l)}\|\leq C\|g^*|_{H^{2i}(Y,\mathbb{Q}_l)}\|,
 \end{eqnarray*}  
 for all $i=0,\ldots ,k$. Here we fix arbitrary norms on the finite-dimensional vector spaces $H^{2i}(X,\mathbb{Q}_l)$ and $H^{2i}(Y,\mathbb{Q}_l)$. 
 
 %Together with these two conditions, we also note a similar pair of conditions. 
 
% {\bf Condition (A').}  Let $X$ be a smooth projective variety over $K$. Let $f_1,f_2:X\rightarrow X$ be two correspondences. Assume that $f_1\geq f_2$, that is there is an effective algebraic cycle $\Gamma  $ on $X\times X$ so that $\Gamma _{f_1}=\Gamma _{f_2}+\Gamma $. Then $\chi _{2i}(f_1)\geq \chi _{2i}(f_2)$ for all $i=0,\ldots ,\dim (X)$.
 
 %{\bf Condition (B').}   Let $X$ and $Y=\mathbb{P}^k$ be smooth projective varieties of the same dimension $k$. Let $\pi :X\rightarrow Y$ be a surjective regular morphism whose all fibres are {\bf finite}. Let $g:Y\rightarrow Y$ be a correspondence, and let $f:X\rightarrow X$ be the correspondence whose graph is $\Gamma _f:=(\pi \times \pi )^*(\Gamma _g)$. Then $\chi _{2i}(f)=\deg (\pi )\chi _{2i}(g)$ for all $i=0,\ldots ,\dim (X)$. 
 
 Some remarks are in order. 
 
 \begin{remarks}
 1) Condition (A) is satisfied if on $H^{2i}(X,\mathbb{Q}_l)$ we have a positivity notion of cohomology classes as in the case $\mathbb{K}=\mathbb{C}$. 
  
 2) For any projective variety $X$ of dimension $k$, there are always (by using generic projections from linear subspaces of projective spaces containing $X$) surjective regular morphisms with {\bf finite} fibres $\pi :X\rightarrow \mathbb{P}^k$.
 
 3) The correspondence $f$ defined in Condition (B) was called the pullback of $g$ by $f$ and was denoted as $\pi ^*(g)$ in \cite{truong9}. In this special case, the cohomological class of $\Gamma _f$ is exactly the same as the pullback under $\pi \times \pi$ of the cohomological class of $\Gamma _g$. For a general dominant regular morphism (more generally, a dominant rational map) with generically finite fibres $\pi :X\rightarrow Y$, we can still define the pullback $\pi ^*(g)$ of any correspondence $g:Y\rightarrow Y$. However, in the general case, no relation is expected for the cohomological classes of $g$ and $\pi ^*(g)$.    
 
 It can be checked that in Condition (B), $\pi \circ f^n=\deg (\pi )^ng^n\circ \pi$ for all $n$. It is shown in \cite{truong9} that we then have $\lambda _i(f)=\deg (\pi )\lambda _i(g)$ for all $i$. 
 
 %4) Condition (A) is stronger than Condition (A'). Condition (B') is a special case of the product formula, and was known for the geometric dynamical degrees $\lambda _i$'s, see \cite{truong9}.
 \label{Remark1}\end{remarks}

The next result concerns Questions 2 and 3. 
 
 \begin{theorem}
 
 1) Condition (B) is always satisfied. 
  
 2) If Condition (A) is satisfied then Question 2 has an affirmative answer. 
  
 %2') Question 2 has an affirmative answer if and only if Condition (A') and (B') are satisfied.  
 
 3) If Question 2 has an affirmative answer then in the definition of $\chi _{2i}(f)$ we can replace limsup by lim. 
 
 4) For rational maps, an affirmative answer for Question 2 implies an affirmative answer for Question 3. 
 \label{Theorem3}\end{theorem}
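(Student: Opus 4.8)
The plan is to establish the four parts in the order $1) \Rightarrow 2) \Rightarrow 3)$, with part $4)$ handled separately via the product formula for the geometric degrees $\lambda_i$. For part $1)$, I would exploit the fact that in Condition (B) the cohomology class of $\Gamma_f = (\pi \times \pi)^*(\Gamma_g)$ is the genuine pullback under $\pi \times \pi$ of the class of $\Gamma_g$ (see Remark \ref{Remark1}(3)). Writing the action of $f^*$ on $H^{2i}(X,\mathbb{Q}_l)$ in terms of the intersection-theoretic formula $f^*(\alpha) = (pr_1)_*[pr_2^*(\alpha).\Gamma_f]$ and using the projection formula for $\pi \times \pi$, one relates $f^*$ on $H^{2i}(X)$ to $g^*$ on $H^{2i}(Y)$ up to the operators $\pi^*$ and $\pi_*$, which are fixed linear maps (independent of $g$) between the finite-dimensional spaces $H^{2i}(X,\mathbb{Q}_l)$ and $H^{2i}(Y,\mathbb{Q}_l)$; since $\pi$ is a finite surjective morphism, $\pi_* \pi^*$ is multiplication by $\deg(\pi)$, so $\pi^*$ is injective and its image is a fixed complemented subspace. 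One then needs the compatibility $f^* \circ \pi^* = \deg(\pi)\, \pi^* \circ g^*$ on cohomology (the cohomological shadow of $\pi \circ f = \deg(\pi)\, g \circ \pi$ from Remark \ref{Remark1}(3)), whence $\|f^*|_{H^{2i}(X,\mathbb{Q}_l)}\| \geq \|g^*|_{H^{2i}(Y,\mathbb{Q}_l)}\|$ up to a constant depending only on the norms and on $\pi^*$; the reverse inequality $\|f^*\| \leq C\|g^*\|$ is what Condition (B) asserts, and it should follow by the same token using that $H^{2i}(X,\mathbb{Q}_l)$ is generated over the image of $\pi^*$ together with its orthogonal complement, on which one controls $f^*$ via the degree bound of Lemma 2.2 in \cite{truong9}. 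The subtlety, and the step I expect to be the main obstacle, is that $f^*$ need not preserve the decomposition $H^{2i} = \mathrm{im}(\pi^*) \oplus (\text{complement})$, so the "off-diagonal" contributions must be bounded uniformly in $g$ — this is exactly where one uses that $X = Y \times_{\text{(something)}}$ fails and instead invokes the explicit degree estimates.

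For part $2)$, assuming Condition (A), the strategy is to compare $\chi_{2i}(f) := \limsup_n \|(f^n)^*|_{H^{2i}(X,\mathbb{Q}_l)}\|^{1/n}$ with $\lambda_i(f) = \lim_n ((f^n)^*(H^i).H^{\dim X - i})^{1/n}$. The inequality $\chi_{2i}(f) \geq \lambda_i(f)$ is automatic. For the reverse, fix a very ample $H$; since $H^i$ is (numerically, up to scaling) dominated by a sum of powers of $H$, and since $(f^n)^*$ of an effective cycle is effective, Condition (A) applied to the pair $\Gamma_{f^n} \geq (\text{pieces dominated by } H)$ lets one bound $\|(f^n)^*|_{H^{2i}}\|$ by a constant times the intersection numbers $((f^n)^*(H^i).H^{\dim X - i})$, i.e. by $C \cdot r_i(f^n)$ with $r_i^{1/n} \to \lambda_i(f)$. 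Taking $n$-th roots and $\limsup$ gives $\chi_{2i}(f) \leq \lambda_i(f)$, hence equality — this is Question 2. For part $3)$, once $\chi_{2i}(f) = \lambda_i(f)$ is known, one uses that $\lambda_i$ is defined via an honest limit in (\ref{Equation1}) (Theorem 1.1 of \cite{truong9}); combined with submultiplicativity of operator norms under the relation between $(f^{n+m})^*$ and intersection numbers, a Fekete-type argument upgrades the $\limsup$ in the definition of $\chi_{2i}$ to a genuine $\lim$.

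For part $4)$, restricting to rational maps $f : X \to X$, $g : Y \to Y$ semi-conjugated by a dominant $\pi : X \to Y$, the point is that the product formula for the geometric dynamical degrees $\lambda_i$ is already available — it is part 4) of Theorem 1.1 in \cite{truong9} — so one has $\lambda_p(f) = \max_i \lambda_i(g)\lambda_{p-i}(f|\pi)$ for all $p$. If Question 2 holds, then $\chi_{2p}(f) = \lambda_p(f)$, $\chi_{2i}(g) = \lambda_i(g)$, so one simply \emph{defines} $\chi_{2(p-i)}(f|\pi) := \lambda_{p-i}(f|\pi)$ and the desired product formula for the $\chi$'s (Question 3) is a formal consequence. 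The only thing to check is that this definition of the relative cohomological degree is the "right" one — i.e. that it matches whatever intrinsic cohomological definition one would write down — which for rational maps is unproblematic since no independent cohomological definition of $\chi_{2j}(f|\pi)$ is being asserted; the statement of Question 3 only requires existence of numbers with the stated property. The genuinely hard case — general correspondences, where $(f^n)^* \neq (f^*)^n$ and the relative degree has no rational-map description — is explicitly left out of part $4)$, and I would flag in the proof that this is why the hypothesis "for rational maps" appears.
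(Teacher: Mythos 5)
Your plan diverges from the paper's proof at the key technical step, and in parts 1) and 2) you are missing the construction that actually makes the argument work.

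For part 1), you anticipate an "off-diagonal" obstacle — that $f^*$ might not respect the decomposition $H^{*}(X,\mathbb{Q}_l) = \pi^*(H^*(Y,\mathbb{Q}_l)) \oplus \ker(\pi_*)$ — and propose to control it via the degree estimates of Lemma~2.2 in \cite{truong9}. In fact that obstacle does not exist: the paper's crucial (and short) observation is that for the pullback correspondence $f$ with $\Gamma_f = (\pi\times\pi)^*(\Gamma_g)$, one has $f^*(\alpha) = 0$ whenever $\pi_*(\alpha) = 0$, so $f^*$ \emph{annihilates} $\ker(\pi_*)$ outright and the decomposition is automatically respected. This is proved by the projection formula: $f^*(\alpha).\beta = \Gamma_{g}.\,pr_2^*(\pi_*\alpha).\,pr_1^*(\pi_*\beta)$, which vanishes if $\pi_*\alpha = 0$. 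Once you have this, $\|f^*|_{H^{2i}(X)}\| = \deg(\pi)^2\|g^*|_{H^{2i}(Y)}\|$ exactly, with no off-diagonal terms to estimate. You should prove this vanishing rather than try to bound a contribution that is in fact zero.

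For part 2), your proposed use of Condition (A) does not fit its hypothesis. Condition (A) compares \emph{two correspondences} $f_1 \geq f_2$ on $X\times X$; the pair you want to feed it ("$\Gamma_{f^n} \geq$ pieces dominated by $H$") is not of this form. The paper's idea, which you are missing, is to \emph{build} the larger correspondence: set $g_n := \pi_*(f^n)$ (a correspondence on $Y = \mathbb{P}^k$) and then $f_n := \pi^*(g_n)$, so $\Gamma_{f_n} = (\pi\times\pi)^*(\pi\times\pi)_*(\Gamma_{f^n})$. Finiteness of the fibres of $\pi\times\pi$ gives $f_n \geq f^n$, and this is the pair to which Condition (A) applies. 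Combined with part 1) (which gives $\|f_n^*\| = \deg(\pi)^2\|g_n^*\|$) and the elementary chain
\begin{eqnarray*}
\|g_n^*|_{H^{2i}(Y,\mathbb{Q}_l)}\| = |(f^n)^*(\pi^*h^i).\pi^*(h^{k-i})| \leq C\|(f^n)^*|_{H^{2i}(X,\mathbb{Q}_l)}\|,
\end{eqnarray*}
one sandwiches $\|(f^n)^*|_{H^{2i}}\|$ between constant multiples of $|(f^n)^*(\pi^*h^i).\pi^*(h^{k-i})|$, and the latter has $n$-th root tending to $\lambda_i(f)$ by (\ref{Equation1}). Without the $g_n, f_n$ construction there is no hook for Condition (A) and your bound does not get off the ground. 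For part 3), the Fekete-type argument you propose is risky: because $(f^{n+m})^* \neq (f^n)^*(f^m)^*$ for general correspondences, submultiplicativity is not available. The paper instead deduces the honest limit from the sandwich established in part 2) together with the fact that $\lambda_i(f)$ is defined by an actual limit. Part 4) of your proposal matches the paper's one-line argument.
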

\begin{proof}[Proof of Theorem \ref{Theorem3}]
 We first make some preparations. Let $\pi :X\rightarrow Y=\mathbb{P}^k$ be a surjective regular morphism with {\bf finite} fibres. Let $f:X\rightarrow X$ be a correspondence. For any positive integer $n$, we define a correspondence $g_n:Y\rightarrow Y$ given by declaring $\Gamma _{g_n}=(\pi\times \pi)_*(\Gamma _{f^n})$. Note that even if $f$ is a regular morphism, $g_n$ will rarely be a regular morphism or even a rational map. Also, in general $f^n$ and $g_n$ are not semi-conjugate, even up to a multiplicative constant.  We overcome this by defining a correspondence $f_n:X\rightarrow X$ by declaring $\Gamma _{f_n}=(\pi\times \pi)^*(\Gamma _{g_n})$.  We note that the cohomology groups of $Y=\mathbb{P}^k$ are very simple, in particular generated by algebraic cycles: $H^{2i}(Y,\mathbb{Q}_l)=H^{2i}_{num}(Y,\mathbb{Q}_l)$. 

Here are some relations between $f^n, g_n$ and $f_n$. First we have $f^n\leq f_n$. Second, we have $\lambda _i(f_n)=\deg (\pi )\lambda _i(g_n)$ for all $n$ and $i$ (see \cite{truong9}). Last, we have
\begin{eqnarray*}
\|g_n^*|_{H^{2i}(Y,\mathbb{Q}_l)}\|\leq C\|(f^n)^*|_{H^{2i}(X,\mathbb{Q}_l)}\|, 
\end{eqnarray*} 
where $C>0$ is independent of $f$ and $n$. In fact, let $h$ be the class of a hyperplane in $Y=\mathbb{P}^k$. Then $H^{2i}(Y,\mathbb{Q}_l)$ is generated by $h^i$. Let $pr_1,pr_2$ denote either the projections $X\times X\rightarrow X$ or $Y\times Y\rightarrow Y$ (the meaning will be clear from the context). Then 
\begin{eqnarray*}
\|g_n^*|_{H^{2i}(Y,\mathbb{Q}_l)}\|&=&|g_n^*(h^i).h^{k-i}|=|\Gamma _{g_n}.pr _{2}^*(h^i).pr _1^*(h^{k-i})|\\
&=&|(\pi \times \pi)_*(\Gamma _{f^n}).pr _{2}^*(h^i).pr _1^*(h^{k-i})|\\
&=&|\Gamma _{f^n}.(\pi \times \pi)^*(pr _{2}^*(h^i).pr _1^*(h^{k-i}))|\\
&=&|\Gamma _{f^n}.pr _{2}^*\pi ^*(h^i).pr _1^*\pi ^*(h^{k-i}))|\\
&=&|(f^n)^*(\pi ^*(h^i)).\pi ^*(h^{k-i})|\\
&\leq& C\|(f^n)^*|_{H^{2i}(X,\mathbb{Q}_l)}\|.
 \end{eqnarray*} 

1) We first observe that if $\alpha \in H^{i}(X,\mathbb{Q}_l)$ is such that $\pi _*(\alpha )=0$ in $H^{i}(Y,\mathbb{Q}_l)$, then $f_n^*(\alpha )=(f_n)_*(\alpha )=0$. We show for example that $f_n^*(\alpha )=0$. To this end, it suffices to show that for any $\beta \in H^{2k-i}(X,\mathbb{Q}_l)$ then $f_n^*(\alpha ).\beta =0$. In fact, we have 
\begin{eqnarray*}
f_n^*(\alpha ).\beta &=&\Gamma _{f_n}.pr_2^*(\alpha ).pr_1^*(\beta )\\
&=&(\pi \times \pi )^*(\Gamma _{g_n}).(pr _2^*(\alpha ).pr _1^*(\beta ))\\
&=&\Gamma _{g_n}.(\pi\times \pi)_*(pr _2^*(\alpha ).pr _1^*(\beta ))\\
&=&\Gamma _{g_n}.pr _2^*(\pi _*\alpha ). pr _1^*(\pi _*\beta ).    
\end{eqnarray*}
The last number is $0$ provided $\pi _*(\alpha )=0$, as assumed. (Note that it is also $0$ if $\pi _*(\beta )=0$.) Here, we used that under the assumptions on $\pi$, the cohomology class of $\Gamma _{f_n}$ is the same as the cohomology class of $(\pi \times \pi)^*(\Gamma _{g_n})$. We also used that $(\pi\times \pi)_*(pr _2^*(\alpha ).pr _1^*(\beta ))=pr _2^*(\pi _*\alpha ). pr _1^*(\pi _*\beta )$.  (This can be seen very easily in the case $\alpha $ and $\beta$ are represented by irreducible subvarieties of $X$, since in this case $pr _2^*(\alpha ).pr _1^*(\beta )$ is represented by the variety $\beta \times \alpha $ in $X\times X$, whose image by $\pi\times \pi $ is exactly $\pi (\beta )\times \pi (\alpha )$. The general case can be proceeded similarly, by using the Kunneth's formula for the $l$-adic cohomology.) 

From the above observation and the decomposition $H^*(X,\mathbb{Q}_l)=\pi ^*(H^*(Y,\mathbb{Q}_l))\oplus~ Ker (\pi _*)$, it follows that 
\begin{eqnarray*}
\|f_n^*|_{H^{i}(X,\mathbb{Q}_l)}\|=\|f_n^*|_{\pi ^*(H^i(Y,\mathbb{Q}_l))}\|=\deg (\pi )^2\|g_n|_{H^i(Y,\mathbb{Q}_l)}\|,
\end{eqnarray*}
provided that the norm on $\pi ^*(H^i(Y,\mathbb{Q}_l)$ is induced from the norm on $H^i(Y,\mathbb{Q}_l)$. This completes the proof. 

2) Assume that Condition (A) is satisfied. Then, from $f_n\geq f^n$ for all $n$, we have
\begin{eqnarray*}
C\|(f_n)^*|_{H^{2i}(X,\mathbb{Q}_l)}\|\geq \|(f^n)^*|_{H^{2i}(X,\mathbb{Q}_l)}\|. 
\end{eqnarray*}
Hence, from the inequalities obtained above, we get
\begin{eqnarray*}
\|(f^n)^*|_{H^{2i}(X,\mathbb{Q}_l)}\|&\geq& |(f^n)^*(\pi ^*(h^i)).\pi ^*(h^{k-i})|= \|g_n^*|_{H^{2i}(Y,\mathbb{Q}_l)}\|\\
&=&\frac{1}{d^2}\|f_n^*|_{H^{2i}(X,\mathbb{Q}_l)}\|\geq \frac{1}{C} \|(f^n)^*|_{H^{2i}(X,\mathbb{Q}_l)}\|.
\end{eqnarray*}
By (\ref{Equation1}), we obtain
\begin{eqnarray*}
\limsup _{n\rightarrow\infty}|(f^n)^*(\pi ^*(h^i)).\pi ^*(h^{k-i})|^{1/n}\leq \lambda _i(f).
\end{eqnarray*}
The proof is thus completed. 

3) This easily follows from similar arguments. 

4) This follows from the results in \cite{truong9} for $\lambda _i$.

\end{proof}

\end{document}